\newtheorem{thm}{Theorem}[section]
\newtheorem{prop}{Proposition}[section]
\newtheorem{rmk}{Remark}[section]
\newtheorem{lma}{Lemma}[section]
\newtheorem{exm}{Example}[section]
\newcommand{\norm}[1]{\left| \! \left| #1\right| \!\right|}
\newcommand{\bnorm}[1]{\big| \! \big| #1\big| \!\big|}
\newcommand{\Xs}{\mathcal{X}}
\newcommand{\sX}{\mathcal{X}}
\newcommand{\Ys}{\mathcal{Y}}
\newcommand{\sY}{\mathcal{Y}}
\newcommand{\sU}{\mathcal{U}}
\newcommand{\Ch}{C_{\square}}
\newcommand{\Ex}[1]{\mathbb{E} \! \left( #1 \right)}
\newcommand{\ip}[1]{\left< #1 \right>}
\newcommand{\dom}{\mathscr{D}}
\newcommand{\Cov}[1]{\textup{Cov}\left[ #1 \right]}
\newcommand{\tr}{\textup{tr}}
\newcommand{\eig}{\textup{eig}}
\newcommand{\sumk}{\sum_{k=1}^{\infty}}
\numberwithin{equation}{section}
\font\eka=cmex10
\def\ind{\mathrel{\hbox{\rlap{%
\hbox to 7.5pt{\hrulefill}}\raise6.6pt\hbox{\eka\char'167}}}}
\begin{document}

\title[]{Convergence of discrete-time Kalman filter estimate to continuous-time estimate for systems with unbounded observation*\footnote{*M\lowercase{anuscript submitted for publication.}}}

\author[Atte Aalto]{Atte Aalto$^{\lowercase{\textup{a,b}}}$ \\ \\ $^{\lowercase{\textup{a}}}$D\lowercase{epartment of} M\lowercase{athematics and} S\lowercase{ystems} A\lowercase{nalysis}, A\lowercase{alto} U\lowercase{niversity}, E\lowercase{spoo}, F\lowercase{inland} \\ $^{\lowercase{\textup{b}}}$I\lowercase{nria}, U\lowercase{niversit\'e} P\lowercase{aris}--S\lowercase{aclay}, P\lowercase{alaiseau}, F\lowercase{rance}; M$\Xi$DISIM \lowercase{team} }

\thanks{Email: atte.ej.aalto@gmail.com}

\begin{abstract}
In this article, we complement recent results on the convergence of the state estimate obtained by applying the discrete-time Kalman filter on a time-sampled continuous-time system. As the temporal discretization is refined, the estimate converges to the continuous-time estimate given by the Kalman--Bucy filter. We shall give bounds for the convergence rates for the variance of the discrepancy between these two estimates. The contribution of this article is to generalize the convergence results to systems with  unbounded observation operators under different sets of assumptions, including systems with diagonalizable generators, systems with admissible observation operators, and systems with analytic semigroups. The proofs are based on applying the discrete-time Kalman filter on a dense, numerable subset on the time interval $[0,T]$ and bounding the increments obtained. These bounds are obtained by studying the regularity of the underlying semigroup and the noise-free output.

\medskip

\noindent
{\it Keywords:} 
Kalman filter; Infinite-dimensional systems; Boundary control systems; temporal discretization; sampled data

\smallskip
\noindent
{\it 2010 AMS subject classification: } 93E11; 47D06; 93C05; 60G15
\end{abstract}

\maketitle

\section{Introduction}

The minimum variance state estimate for linear systems with Gaussian noise processes is given by the continuous-time Kalman filter. However, for obvious reasons, in a practical implementation the continuous-time system is often first discretized, and then the discrete-time Kalman filter is used on the discretized system.
The objective of this article is to expand the recent results presented in \cite{tempI} by the author on the convergence of the state estimate given by the discrete-time Kalman filter on the sampled system to the continuous-time estimate. There convergence results were shown for finite-dimensional systems and infinite-dimensional systems with bounded observation operators. The expansion in this paper covers systems with unbounded observation operators and systems whose dynamics are governed by an analytic semigroup. In particular, we shall show convergence rate estimates for the variance of the discrepancy between the discrete- and continuous-time estimates.

We study systems whose dynamics are given by 
\begin{equation} \label{eq:system}
\begin{cases}
dz(t)=Az(t) \, dt + Bdu(t), \qquad t \in \mathbb{R}^+, \\
dy(t)=Cz(t) \, dt + dw(t), \\
z(0)=x
\end{cases}
\end{equation}
where $A:\sX \to \sX$, $B:\sU \to \sX$, and $C:\sX \to \sY$. The Hilbert spaces $\sX$,  $\sU = \mathbb{R}^q$, and $\sY=\mathbb{R}^r$ are called the \emph{state space}, the \emph{input space}, and  the \emph{output space}, respectively. The mapping $A$ is the generator of a $C_0$-semigroup $e^{At}$ on $\sX$ with domain $\dom(A)$, $B:\mathbb{R}^q \to \sX$ is the \emph{control operator},  and $C:\sX \to
\mathbb{R}^r$ is called the \emph{observation operator}. The dynamics equations \eqref{eq:system} are given in the form of stochastic differential equations, see~\cite{Oks} by \O ksendal for background. The input and output noise processes $u$ and $w$ are assumed to be $q$- and $r$-dimensional Brownian motions with incremental covariance matrices $Q \ge 0$ and $R>0$, respectively. Without loss of generality, we assume that there is no deterministic input, as it can always be removed by the usual techniques. The initial state $x \in \Xs$ is assumed to be a Gaussian random variable with mean $m$ and covariance $P_0$, denoted $x \sim N(m,P_0)$, and $u$, $w$, and $x$ are assumed to be mutually independent.

The purpose of this paper is to study the discrepancy of the discrete- and continuous-time state estimates, defined by
\begin{equation} \label{eq:estimates} \hat{z}_{T,n}:=\Ex{z(T) \,
    \Big|\left\{y \! \left(\tfrac{iT}{n}\right) \! \right\}_{i=1}^n}  \
  \textrm{ and }  \ \hat{z}(T):=\Ex{z(T) \, \big|\big\{y(s),s \le T
    \big\}}, \hspace{-4mm}
\end{equation}
respectively, and in particular, find convergence rate estimates for the variance $\Ex{\norm{\hat{z}_{T,n}-\hat{z}(T)}_{\Xs}^2}$ as $n \to \infty$ when the observation operator $C$ is not bounded, which typically occurs when we get a pointwise or a boundary measurement from the computational domain of a system whose dynamics are governed through a partial differential equation. However, we do assume that $C \in \mathcal{L}(\dom(A),\sY)$.
 
 The state estimates $\hat z_{T,n}$ and $\hat z(T)$ are obtained by the Kalman(--Bucy) filter --- provided that the continuous-time Kalman filter equations are solvable. The Kalman filter was originally presented by Kalman in \cite{Kalman} for discrete-time systems and by Kalman and Bucy in \cite{Kalman_Bucy} for continuous-time systems. The infinite-dimensional generalization has been treated for example by Falb in \cite{Falb}, by Bensoussan in \cite{Bensoussan}, by Curtain and Pritchard in \cite{Curtain_Pritchard}, and by Horowitz in \cite{Horowitz}. Of course the infinite-dimensional setting gives rise to many technical issues, such as unbounded control and observation operators and the solvability of the corresponding Riccati equations. These problems are tackled for example by Da Prato and Ichikawa in  \cite{DaPrato_Ichikawa} and by Flandoli in \cite{Flandoli86}.

  In the results of this paper we assume that the temporal discretization can be done perfectly, so that the only error source is the sampling of the continuous-time output signal. We refer to the review article \cite{Sample} by Goodwin \emph{et al}. for a discussion on the sampling of continuous-time systems and in particular \cite{Salgado} by Salgado \emph{et al.} for a study on the sampled data Riccati equations and the Kalman filter. In practice, approximative numerical schemes are used for solving both the state estimate $\hat z_{T,n}$ and the corresponding error covariance. For a discussion on this topic, see \cite{Axelsson} by Axelsson and Gustafsson and \cite{Frogerais} by Frogerais \emph{et al.} treating nonlinear systems.

 In Section~\ref{sec:background}, we shall introduce the ingredients for the proofs of our results. The main idea is to apply the discrete-time Kalman filter on a dense, numerable subset of the interval $[0,T]$. This way we obtain a martingale that starts from the discrete-time estimate $\hat z_{T,n}$ and converges almost surely to the continuous-time estimate $\hat z(T)$. We shall then find bounds for the increments of this martingale. These bounds are obtained by studying the regularity of the semigroup $e^{At}$ and in particular, the smoothness of the noise-free output $Ce^{At}x$ for $x \in \sX$. As was noted in \cite{tempI} and as seen later in the proof of Theorem~\ref{thm:noise}, the effects of the input noise process $u$ and the initial state $x$ can be treated separately. Therefore we shall first derive several results with different assumptions on the system concerning just the effect of the initial state in Section~\ref{sec:noiseless}. Finally, in Section~\ref{sec:noise}, we shall consider the effect of the input noise. The input noise effect is shown with the assumption of admissibility of the observation operator $C$.

\subsection*{Notation and standing assumptions}
We denote by $\{ e_k \}_{k=1}^{\infty} \subset \dom(A)$ an orthonormal basis for the state space $\sX$. The operator $A$ generates a strongly continuous semigroup that is  bounded by
 $\norm{e^{At}}_{\mathcal{L}(\sX)} \le \mu$ for $t \in [0,T]$.

\section{Background} \label{sec:background}

The idea of the proofs is exactly the same as in \cite{tempI}, but here we need to deal with many more technical issues. That is, we define a dense, numerable subset of the time interval $[0,T]$ and apply the discrete-time Kalman filter in this subset. Then we compute an upper bound for each increment in the state estimate and finally sum up these bounds. So let us define the time points $t_j$ for $j=1,2,...$ through the dyadic division
\begin{equation} \label{eq:times}
t_j= \left\{ \hspace{-1.5mm} \begin{array}{ll} \frac{T}nj, & j=1,...,n \vspace{1.5mm} \\ \frac{T}{2^K n} (2j-1), & j=2^{K-1}n+1,...,2^Kn, \textrm{ for some } K=1,2,... \end{array} \right.
\end{equation}
The time point definition is illustrated in Figure~\ref{fig:times}. Then define $\textup{T}_j:=\{ t_i \}_{i=1}^j$ and the $\Xs$-valued martingale $\tilde z_j=\Ex{z(T)|\{ y(t), t \in \textup{T}_j \}}$. Define also the shorthand notation $y(\textup{T}_j)=\{ y(t), t \in \textup{T}_j \}$. Now it holds that $\tilde z_n=\hat z_{T,n}$, and as discussed in \cite[Section~2.1]{tempI}, as $j \to \infty$, the martingale $\tilde z_j$ converges almost surely strongly to $\hat z(T)$. The idea in the proofs in this paper is to find upper bounds for the increments $\tilde z_{j+1}-\tilde z_j$, for $j \ge n$.

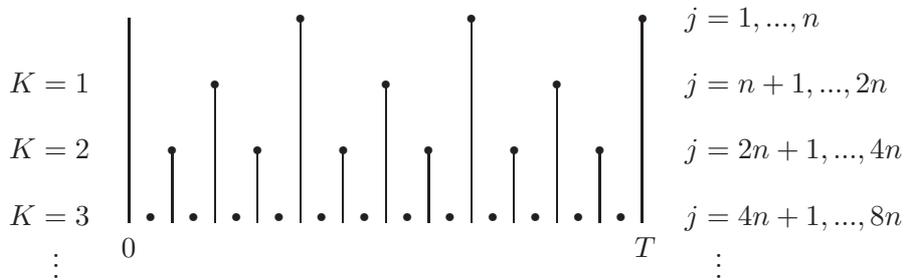
\begin{figure}[b]
\center
\begin{picture}(270,105)
\thicklines
\put(10,100){\line(0,-1){77}}
\put(202,100){\line(0,-1){77}}

\thinlines
\put(74,100){\circle*{3}}
\put(74,100){\line(0,-1){77}}
\put(138,100){\circle*{3}}
\put(138,100){\line(0,-1){77}}
\put(202,100){\circle*{3}}

\put(42,75){\circle*{3}}
\put(42,75){\line(0,-1){52}}
\put(106,75){\circle*{3}}
\put(106,75){\line(0,-1){52}}
\put(170,75){\circle*{3}}
\put(170,75){\line(0,-1){52}}

\put(26,50){\circle*{3}}
\put(26,50){\line(0,-1){27}}
\put(58,50){\circle*{3}}
\put(58,50){\line(0,-1){27}}
\put(90,50){\circle*{3}}
\put(90,50){\line(0,-1){27}}
\put(122,50){\circle*{3}}
\put(122,50){\line(0,-1){27}}
\put(154,50){\circle*{3}}
\put(154,50){\line(0,-1){27}}
\put(186,50){\circle*{3}}
\put(186,50){\line(0,-1){27}}

\put(18,25){\circle*{3}}
\put(34,25){\circle*{3}}
\put(50,25){\circle*{3}}
\put(66,25){\circle*{3}}
\put(82,25){\circle*{3}}
\put(98,25){\circle*{3}}
\put(114,25){\circle*{3}}
\put(130,25){\circle*{3}}
\put(146,25){\circle*{3}}
\put(162,25){\circle*{3}}
\put(178,25){\circle*{3}}
\put(194,25){\circle*{3}}

\put(218,97){$j = 1,...,n$}
\put(218,72){$j=n+1,...,2n$}
\put(218,47){$j=2n+1,...,4n$}
\put(218,22){$j=4n+1,...,8n$}

\put(-35,72){$K=1$}
\put(-35,47){$K=2$}
\put(-35,22){$K=3$}

\put(7,10){0}
\put(199,10){$T$}
\put(229,2){$\vdots$}
\put(-19,2){$\vdots$}

\end{picture}
\caption{Illustration of the time point addition scheme in the construction of the martingales $\tilde x_j$ and $\tilde z_j$ (from \cite{tempI}).}\label{fig:times}
\end{figure}

As the martingale $\tilde z_j$ is square integrable, we have the telescope identity shown in \cite[Lemma~1]{tempI} for $L,N \in \mathbb{N}$ with $L \ge N$:
\begin{equation} \label{eq:telescope}
\Ex{\norm{\tilde z_L-\tilde z_N}_{\Xs}^2}=\sum_{j=N}^{L-1} \Ex{\norm{\tilde z_{j+1}-\tilde z_j}_{\Xs}^2}.
\end{equation}
We remark that setting $N=n$ and letting $L \to \infty$ gives $\Ex{\norm{\hat z(T)-\hat z_{T,n}}_{\Xs}^2}=\sum_{j=n}^{\infty} \Ex{\norm{\tilde z_{j+1}-\tilde z_j}_{\Xs}^2}$.

Let us then establish an expression for one increment $\Ex{\norm{\tilde z_{j+1}-\tilde z_j}_{\Xs}^2}$.
Say $[\xi,\xi_1]$ is a jointly Gaussian random variable in some product space. Denote $\hat \xi_1 := \Ex{\xi|\xi_1}$ and $P_1:=\Cov{\hat\xi_1-\xi,\hat\xi_1-\xi}$. Then say
\begin{equation} \label{eq:out}
 \xi_2=H\xi+w 
 \end{equation}
 where $H$ is a bounded operator and $w$ is a Gaussian random variable with mean zero and covariance $R>0$ and $w$ is independent of $[\xi,\xi_1]$. Then it holds that 
\[
\hat\xi_2:=\Ex{\xi|\{\xi_1,\xi_2\}}=\hat\xi_1+P_1H^*(HP_1H^*+R)^{-1}(\xi_2-H\hat\xi_1).
\]
Then
\[
\hat\xi_2-\hat\xi_1=P_1H^*(HP_1H^*+R)^{-1}(H(\xi-\hat\xi_1)+w)
\]
from which it directly follows that
\[
\Cov{\hat\xi_2-\hat\xi_1,\hat\xi_2-\hat\xi_1}=P_1H^*(HP_1H^*+R)^{-1}HP_1,
\]
and further,
\begin{equation} \label{eq:trace}
\Ex{\bnorm{ \hat\xi_2-\hat\xi_1}^2}=\tr \left( P_1H^*(HP_1H^*+R)^{-1}HP_1\right).
\end{equation}
The basic ingredients for the proofs in this paper are now presented, namely the martingale $\tilde z_j$ defined as $\Ex{z(T)|y(\textup{T}_j)}$ with $\textup{T}_j$ defined in \eqref{eq:times}, the telescope identity \eqref{eq:telescope}, and equation \eqref{eq:trace} for the increment norm.

Later we sometimes need the assumption that $x \in \dom(A)$ almost
surely. With Gaussian random variables this means that $x$ is actually
a $\dom(A)$-valued random variable.
\begin{prop} \label{prop:X_1} Let $\xi$ be an $\sX$-valued
  Gaussian random variable s.t. $\xi \in \sX_1$ almost surely where
  $\sX_1 \subset \sX$ is another Hilbert space with continuous and
  dense embedding. Then $\xi$ is an $\sX_1$-valued Gaussian random
  variable.
\end{prop}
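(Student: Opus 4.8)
The plan is to verify the two defining properties of an $\sX_1$-valued Gaussian random variable: that $\xi$ is a Borel-measurable map into $\sX_1$, and that $\langle\xi,\psi\rangle$ is a real Gaussian variable for every $\psi\in\sX_1^*$. Together with the standard characterization of Gaussian measures on a separable Hilbert space, these yield automatically that the mean of $\xi$ lies in $\sX_1$ and that its covariance operator on $\sX_1$ is trace class, which is the content of the assertion. Everything rests on the Gelfand-triple structure of the embedding: writing $j\colon\sX_1\to\sX$ for the continuous dense embedding and identifying $\sX$ with its dual via the Riesz map, the adjoint $j^*$ maps $\sX$ continuously into $\sX_1^*$, injectively (because $j$ has dense range) and with dense range (because $j$ is injective). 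Concretely, $\phi\in\sX$ acts on $v\in\sX_1$ through $\langle\phi,v\rangle_{\sX}$, so the functionals in this dense subset of $\sX_1^*$ are precisely the ones whose action on $\xi$ is controlled by the hypothesis.

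First I would fix $\psi\in\sX_1^*$ and pick $\phi_n\in\sX$ with $j^*\phi_n\to\psi$ in $\sX_1^*$. On the almost sure event $\{\xi\in\sX_1\}$ one has $|\langle\phi_n,\xi\rangle_{\sX}-\langle\psi,\xi\rangle|\le\norm{j^*\phi_n-\psi}_{\sX_1^*}\norm{\xi}_{\sX_1}\to0$, so $\langle\phi_n,\xi\rangle_{\sX}\to\langle\psi,\xi\rangle$ almost surely. Each $\langle\phi_n,\xi\rangle_{\sX}$ is real Gaussian since $\xi$ is $\sX$-Gaussian, and an almost sure limit of real Gaussians is Gaussian: the characteristic functions $\exp(i\mu_n t-\tfrac12\sigma_n^2 t^2)$ can converge to a characteristic function only if $\mu_n\to\mu$ and $\sigma_n^2\to\sigma^2<\infty$, forcing the limit to be $N(\mu,\sigma^2)$, possibly degenerate. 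Hence $\langle\psi,\xi\rangle$ is Gaussian for every $\psi\in\sX_1^*$.

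The same almost sure identity exhibits $\langle\psi,\xi\rangle$ as a pointwise limit of measurable functions, so $\xi$ is weakly measurable into $\sX_1$; since $\sX_1$ is separable (as holds for $\sX_1=\dom(A)$ with the graph norm, which embeds isometrically into $\sX\times\sX$), the Pettis measurability theorem makes $\xi$ a Borel-measurable $\sX_1$-valued map, and its law is a genuine Borel probability measure on $\sX_1$. Having all one-dimensional projections Gaussian, this measure is a Gaussian measure on the separable Hilbert space $\sX_1$, which is exactly the claim.

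I expect the only genuine subtlety to be bookkeeping with the two dualities: one must check that functionals coming from $\sX$ act on $\sX_1$-vectors through the $\sX$ inner product, so that the Gaussianity hypothesis applies verbatim, and that their images fill out $\sX_1^*$ densely. The limit-of-Gaussians step and the passage from weak to Borel measurability are then routine.
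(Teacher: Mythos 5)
Your proof is correct and follows essentially the same route as the paper's: approximate a functional in $\sX_1^*$ by functionals coming from $\sX$ (via the dense range of the embedding's adjoint), observe that the pairings with $\xi$ converge almost surely on the event $\{\xi\in\sX_1\}$, and use that a pointwise limit of real Gaussian variables is Gaussian. Your write-up is somewhat more careful than the paper's (explicit Gelfand-triple bookkeeping and the Pettis measurability step), but the underlying argument is the same.
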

\begin{proof}
  Pick $h \in \sX_1$. We intend to show that $\ip{\xi,h}_{\sX_1}$ is a
  real-valued Gaussian random variable. For $h \in \sX_1$ there exists
  $h' \in \sX_1'$, the dual space of $\sX_1$,
  s.t. $\ip{\xi,h}_{\sX_1}=\ip{\xi,h'}_{(\sX_1,\sX_1')}$ and further,
  there exists a sequence $\{h_i\}_{i=1}^{\infty} \subset \sX$ such
  that $\ip{\xi,h'}_{(\sX_1,\sX_1')}=\lim_{i \to
    \infty}\ip{\xi,h_i}_{\sX}$. Now $\ip{\xi,h_i}_{\sX}$ is a pointwise
  converging sequence of Gaussian random variables and so the limit is
  also Gaussian.
\end{proof}
\noindent Fernique's theorem \cite[Theorem~2.6]{DaPrato} can be
applied to note that
if $\xi$ is an $\sX_1$-valued Gaussian random variable then $\xi \in
L^p(\Omega;\sX_1)$ for any $p > 0$. In particular,
$\Ex{\norm{\xi}_{\sX_1}^2}<\infty$ and if $A \in \mathcal{L}(\sX_1,\sX)$ then
$A\xi$ is an $\sX$-valued Gaussian random variable.

\section{Convergence results without input noise} \label{sec:noiseless}

Assume now that there is no input noise in the system \eqref{eq:system}, that is, $u=0$. The output is then given by 
\[
y(t)=C\int_0^t e^{As}x \, ds + w(t)
\]
where $w$ is a Brownian motion. In a sense, the output is parameterized by the initial state $x$, and therefore we also define the martingale $\tilde x_j := \Ex{x|y(\textup{T}_j}$. Then in the absence of input noise it holds that $\tilde z_j = e^{AT}\tilde x_j$.

In order to use \eqref{eq:trace} to compute one increment $\Ex{\norm{\tilde z_{j+1}-\tilde z_j}_{\Xs}^2}$,  we need to consider how to take into account an intermediary observation $y(t_{j+1})$ in the state estimate. Obviously the noise process value $w(t_{j+1})$ is not independent of the  measurements $y(t_i)$ with $i=1,...,j$ as required in order to use \eqref{eq:trace}. 
As in \cite{tempI}, the dependence of the noise term $w(t_{j+1})$ on $y(t_i)$ with $i=1,...,j$ is removed by subtracting from $y(t_{j+1})$ the linear interpolant $\frac12 \big( y(t_{j+1}-h)+y(t_{j+1}+h) \big)$ where $h=\frac{T}{2^Kn}$ with the corresponding $K$ (see \eqref{eq:times} and Figure~\ref{fig:times}). Note that $t_{j+1} \pm h \in \textup{T}_j$. When there is no input noise, this new output is obtained as
\begin{equation} \label{eq:tildey}
\tilde y_{j+1}=C_h(t_{j+1})x+\tilde w_{j+1}
\end{equation}
where $C_h(t)$ is defined for $x \in \Xs$ and $t \ge h$ by
\begin{equation} \label{eq:C_h}
C_h(t)x:=\frac{C}2 \left( \int_{t-h}^t e^{As}x \, ds -\int_t^{t+h} e^{As}x \, ds \right),
\end{equation}
and $\tilde w_{j+1} \sim N \left(0, \frac{h}2 R \right)$ is independent of $y(t_i)$ and hence of $\tilde y_i$ for $i=1,...,j$. Note that we can first define $C_h(t)x$ only for $x \in \dom(A)$. However, it holds that
  \begin{align*} \norm{\int_t^{t+h}e^{As}x \, ds}_{\dom(A)}^2 &
    =\norm{\int_t^{t+h}e^{As}x \,
      ds}_{\sX}^2+\norm{A\int_t^{t+h}e^{As}x \, ds}_{\sX}^2 \\
    &\le h^2\mu^2\norm{x}_{\sX}^2+\norm{\left(e^{A(t+h)}-e^{At}\right)x}_{\sX}^2
    \le \left(h^2+4 \right)\mu^2 \norm{x}_{\sX}^2.
\end{align*}
Therefore, we can uniquely extend $C_h(t)$ to a continuous operator from $\Xs$ to $\Ys$, and $\norm{C_h(t)}_{\mathcal{L}(\sX,\sY)} \le \mu\sqrt{h^2+4}
\norm{C}_{\mathcal{L}(\dom(A),\sY)}$. In addition, using $Ce^{As}\xi=Ce^{At}\xi+\int_t^s CAe^{Ar}\xi dr$ in \eqref{eq:C_h} yields a useful bound
\begin{equation} \label{eq:C_hder}
\norm{C_h(t)\xi}_{\sY} \le \frac12 \! \left( \int_{t-h}^t  \int_s^t \! \norm{CAe^{Ar}\xi}_{\sY}dr \, ds+ \!\int_t^{t+h} \!\! \int_t^s \! \norm{CAe^{Ar}\xi}_{\sY}dr \, ds \!  \right) \!\!
\end{equation}
provided that $CAe^{Ar}\xi \in L^1(t-h,t+h;\sY)$.

Now the form \eqref{eq:tildey} is exactly as \eqref{eq:out} and so (recalling $\tilde z_j=e^{AT}\tilde x_j$) we can use \eqref{eq:trace} to obtain
\begin{align} \label{eq:incnorm}
\Ex{\norm{\tilde z_{j+1}-\tilde z_j}_{\Xs}^2} & = \tr \Bigg( e^{AT} P_jC_h(t_{j+1})^*\times  \\ &  \times \!\left(  \! C_h(t_{j+1})P_jC_h(t_{j+1})^* \! +\frac{h}2 R \right)^{-1}\!\! C_h(t_{j+1})P_j e^{A^*T} \!  \Bigg). \nonumber
\end{align}
Finally we are able to show the main lemma, which links the convergence of the state estimate to properties of the operator $C_h(t)$ and the smoothness of the output $y$. This lemma serves as the basis for all the proofs of our main theorems when there is no input noise.
\begin{lma} \label{lma:batch}
Let $\sX_1$ be a dense Hilbert subspace of $\sX$ with a continuous embedding. Assume that there exist $M > 0$ and $k>1$ such that for any $K \in \mathbb{N}$ and $\xi \in \sX_1$, it holds that
\begin{equation} \label{eq:batch}
\sum_{j=2^{K-1}n+1}^{2^Kn} \!\!\! {\norm{C_h(t_j)\xi}_{\Ys}^2} \le M h^k \norm{\xi}_{\sX_1}^2
\end{equation}
where $h=\frac{T}{2^Kn}$. Assume also that $x \in \sX_1$ almost surely and $u=0$. Then for $\hat z_{T,n}$ and $\hat z(T)$ defined in \eqref{eq:estimates}, it holds that
\[
\Ex{\norm{\hat{z}_{T,n}-\hat{z}(T)}_{\Xs}^2} \le \frac{2M  \Ex{\norm{x}_{\sX_1}^2} \Ex{\norm{\hat z_{T,n}-z(T)}_{\Xs}^2}}{(2^{k-1}-1)\min(\eig(R))}\frac{T^{k-1}}{n^{k-1}}.
\]
\end{lma}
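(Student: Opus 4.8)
The plan is to bound every summand of the telescope identity \eqref{eq:telescope} (taken with $N=n$ and $L\to\infty$) and then to sum the bounds across the dyadic levels $K$. Throughout write $C_h=C_h(t_{j+1})$, let $P_j$ be the covariance of the filtering error $\epsilon_j:=\tilde x_j-x$, and set $\eta_j:=\tilde z_j-z(T)=e^{AT}\epsilon_j$, the error of the time-$T$ estimate built from $y(\textup T_j)$. Starting from the increment formula \eqref{eq:incnorm}, the first step is to discard the signal contribution to the innovation covariance: because $C_hP_jC_h^*\ge0$, the $r\times r$ matrix $\big(C_hP_jC_h^*+\tfrac h2R\big)^{-1}$ is dominated by $\big(\tfrac h2R\big)^{-1}\le\frac{2}{h\min(\eig(R))}I$, so conjugating by $C_hP_je^{A^*T}$ and taking the trace gives
\[
\Ex{\norm{\tilde z_{j+1}-\tilde z_j}_{\sX}^2}\le\frac{2}{h\min(\eig(R))}\,\tr\big(e^{AT}P_jC_h^*C_hP_je^{A^*T}\big).
\]

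The decisive step is to factorize the surviving trace, and this is where the product appearing in the statement is born. The operator $e^{AT}P_jC_h^*$ is exactly the cross-covariance $\Ex{\eta_j(C_h\epsilon_j)^{*}}$ between the $\sX$-valued error $\eta_j$ and the $\sY$-valued vector $C_h\epsilon_j$; writing out the trace as a sum over the $r$ output coordinates and applying Cauchy--Schwarz in each coordinate yields
\[
\tr\big(e^{AT}P_jC_h^*C_hP_je^{A^*T}\big)\le\Ex{\norm{\eta_j}_{\sX}^2}\,\Ex{\norm{C_h\epsilon_j}_{\sY}^2}.
\]
The first factor is the variance of the time-$T$ estimation error, the second is governed by $C_h$ and hence by the hypothesis \eqref{eq:batch}.

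It remains to bound the two factors uniformly in $j$ and to sum. I would invoke the monotonicity of the conditional covariances: enlarging $\textup T_j$ cannot increase the error covariance, so $P_j\le P_n$ for $j\ge n$ and $P_j\le P_0=\Cov{x}$ for every $j$ (indeed the Kalman update subtracts the positive term $P_jC_h^*(C_hP_jC_h^*+\tfrac h2R)^{-1}C_hP_j$ from $P_j$). The first inequality gives $\Ex{\norm{\eta_j}_\sX^2}=\tr(e^{AT}P_je^{A^*T})\le\tr(e^{AT}P_ne^{A^*T})=\Ex{\norm{\hat z_{T,n}-z(T)}_\sX^2}$, which is constant in $j$. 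The second gives $\Ex{\norm{C_h\epsilon_j}_\sY^2}=\tr(C_hP_jC_h^*)\le\tr(C_hP_0C_h^*)=\Ex{\norm{C_h(x-m)}_\sY^2}\le\Ex{\norm{C_hx}_\sY^2}$, which is crucial because it replaces the filtering error $\epsilon_j$ (which need not lie in $\sX_1$) by the $\sX_1$-valued initial state, making \eqref{eq:batch} applicable pathwise. Grouping the increments by the dyadic level $K$ of the added node $t_{j+1}$, on which $h=T/(2^Kn)$ is constant, and applying \eqref{eq:batch} with $\xi=x$ then gives
\[
\sum_{j+1=2^{K-1}n+1}^{2^Kn}\Ex{\norm{\tilde z_{j+1}-\tilde z_j}_\sX^2}\le\frac{2M\,\Ex{\norm{x}_{\sX_1}^2}\,\Ex{\norm{\hat z_{T,n}-z(T)}_\sX^2}}{\min(\eig(R))}\,h^{k-1}.
\]
Summing $h^{k-1}=(T/n)^{k-1}2^{-K(k-1)}$ over $K\ge1$ produces the geometric factor $\sum_{K\ge1}2^{-K(k-1)}=(2^{k-1}-1)^{-1}$, which converges precisely because $k>1$, and reproduces the asserted estimate.

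The main obstacle is the factorization in the second paragraph. Crude operator estimates (such as bounding $e^{A^*T}e^{AT}\le\mu^2I$) only produce a term in $P_j^2$, which neither telescopes nor relates to the target factor $\tr(e^{AT}P_ne^{A^*T})$; it is the identification of the trace with the squared Hilbert--Schmidt norm of a genuine cross-covariance operator, and the resulting split into a product of two variances, that simultaneously surfaces the estimation-error factor $\Ex{\norm{\hat z_{T,n}-z(T)}_\sX^2}$ and a factor to which the summation hypothesis \eqref{eq:batch} can be applied.
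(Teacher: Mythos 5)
Your proof is correct and follows essentially the same route as the paper: the same bound on the innovation covariance inverse, the same Cauchy--Schwarz factorization of the trace into $\Ex{\norm{\tilde z_j-z(T)}_{\sX}^2}\,\tr(C_hP_jC_h^*)$ (you sum over the $r$ output coordinates where the paper sums over an orthonormal basis of $\sX$, which is the same estimate by cyclicity of the trace), the same monotonicity $P_j\le P_n\le P_0$, and the same dyadic summation ending in the geometric series $\sum_{K\ge1}2^{-K(k-1)}=(2^{k-1}-1)^{-1}$.
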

\noindent Note that $\Ex{\norm{x}_{\sX_1}^2}$ is well defined and finite by Proposition~\ref{prop:X_1}. A strict \emph{a priori} result is obtained by replacing $\Ex{\norm{\hat z_{T,n}-z(T)}_{\Xs}^2} \le \mu^2 \Ex{\norm{x}_{\sX}^2}$.
\begin{proof}
We will use a shorter notation $C_h=C_h(t_{j+1})$ in the proof. Note that
\begin{equation} \nonumber 
\norm{\left( C_hP_jC_h^*+\frac{h}2 R \right)^{-1}} \le \frac{2}{h \min(\eig(R))}=:\frac{C_R}h.
\end{equation}
One increment can be bounded by \eqref{eq:incnorm}:
\begin{align*} 
  & \tr \! \left( \! e^{AT}P_j C_h^* \left( C_h P_j
      C_h^*+\frac{h}{2}R \right)^{\! -1} \!\! C_h P_j e^{A^*T}\right) \\& =\sum_{k=1}^{\infty} 
  \ip{C_hP_je^{A^*T}e_k,\left( C_h P_j C_h^*+\frac{h}{2}R \right)^{\! -1} \!\! C_h P_je^{A^*T}e_k}_{\! \sY} \\
   & \le \frac{C_R}{h}\sum_{k=1}^{\infty} \norm{C_hP_je^{A^*T}e_k}_{\sY}^2=\frac{C_R}{h}\sum_{k=1}^{\infty} \norm{\Ex{C_h(\tilde x_j-x)\ip{e^{AT}(\tilde x_j-x),e_k}_{\sX}}}_{\sY}^2 
\\
  & \le \frac{C_R}{h}\Ex{\norm{C_h(\tilde
      x_j-x)}_{\sY}^2} \sum_{k=1}^{\infty} \Ex{\ip{e^{AT}(\tilde x_j-x),e_k}_{\sX}^2} \\        &\le \frac{C_R}{h} \tr(C_hP_jC_h^*)\Ex{\norm{\tilde z_j-z(T)}_{\sX}^2}.
\end{align*}
Now we use this bound, $P_j \le P_n \le P_0$, and assumption \eqref{eq:batch} to sum up the increments corresponding to $h=\frac{T}{2^K n}$, that is, $j=2^{K-1}n,...,2^Kn-1$:
\begin{align*}
& \sum_{j=2^{K-1}n}^{2^Kn-1} \Ex{\norm{\tilde z_{j+1}-\tilde z_j}_{\Xs}^2} \le M C_R  \Ex{\norm{x}_{\sX_1}^2} \Ex{\norm{\tilde z_n-z(T)}_{\Xs}^2} \left(\frac{T}{2^{K}n}\right)^{k-1}
\end{align*}
and finally, summing up over $K=1,2,...$ gives the result.
\end{proof}

\subsection{Diagonalizable main operator}

We proceed to prove a convergence result for systems with unbounded
observation operator $C$ --- provided that $A$ is (unitarily)
diagonalizable.  The proof is based on Lemma~\ref{lma:batch}. 
 To get a useful bound for $\norm{C_h\xi}_{\Ys}^2$, some assumptions on the degree of unboundedness of $C$ and the spectral asymptotics
of $A$ are required.
\begin{thm} \label{thm:unbounded} Let $\hat{z}_{T,n}$ and
  $\hat{z}(T)$ be as defined above in \eqref{eq:estimates}. Denote by
  $\left\{\lambda_k \right\}_{k=1}^{\infty} \subset \mathbb{C}$ the spectrum of
  $A$ ordered so that $|\lambda_k|$ is non-decreasing and let
  $\left\{ e_k \right\}_{k=1}^{\infty} \subset \dom(A)$ be the
  corresponding set of eigenvectors that give an orthonormal basis for
  $\sX$. Make the following assumptions on $x$, $A$, and $C$:
\begin{itemize}
\item[(i)] $x \in \dom(A)$ almost surely;
\item[(ii)] There exists $\delta >
  1/2$ such that
  \begin{equation} \nonumber \lim_{k \to
      \infty}\frac{|\lambda_k|}{k^{\beta}}= \left\{ \begin{array}{ll} 0 & \textrm{when } \beta > \delta, \\
\infty &
      \textrm{when } \beta < \delta; \end{array} \right.
\end{equation}
\item[(iii)] There exists $\gamma \in [0,1)$ such that $2\gamma+1/\delta<2$ and
\begin{equation} \nonumber
\sup_k \frac{\norm{Ce_k}_{\sY}}{|\lambda_k|^{\gamma}}<\infty.
\end{equation}
\end{itemize}
Then the following holds:
\begin{itemize}
\item If $\lim_{k \to
    \infty}\frac{|\lambda_k|}{k^{\delta}}=\Gamma \in
  (0,\infty)$, then
\begin{equation} \nonumber
 \Ex{\norm{\hat{z}_{T,n}-\hat{z}(T) }_{\sX}^2} \le
  \frac{MT^{3-2\gamma-1/\delta}}{n^{2-2\gamma-1/\delta}}
\end{equation}
where the constant $M$ is given below in \eqref{eq:constant_M}.

\item If either this limit does not exist, or it is 0 or $\infty$, then
  for all $\epsilon \in \left(0, \delta-\frac1{2-\gamma}
  \right)$ 
  \begin{equation} \nonumber \Ex{\norm{\hat{z}_{T,n}-\hat{z}(T)
      }_{\sX}^2} \le
    \frac{M_{\epsilon}T^{3-2\gamma-1/(\delta+\epsilon)}}{n^{2-2\gamma-1/(\delta-\epsilon)}}
\end{equation}
where the $\epsilon$-dependent constant $M_{\epsilon}$ is given below
also in \eqref{eq:constant_M} but with different, $\epsilon$-dependent
parameters (see the last paragraph of the proof).

\end{itemize}

\end{thm}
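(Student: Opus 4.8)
The plan is to verify the batch hypothesis \eqref{eq:batch} of Lemma~\ref{lma:batch} with $\sX_1=\dom(A)$ in its graph norm, which embeds densely and continuously into $\sX$ and contains $x$ almost surely by assumption (i); the two stated rates then follow by reading off the resulting exponent $k$ from the conclusion of the lemma. First I would diagonalize: since $e^{As}e_k=e^{\lambda_k s}e_k$, definition \eqref{eq:C_h} gives for $\xi=\sumk \xi_k e_k$ the termwise form $C_h(t)\xi=\frac12\sumk \xi_k \lambda_k^{-1} b_k(t,h)\,Ce_k$, where the scalar multiplier $b_k(t,h)=2e^{\lambda_k t}-e^{\lambda_k(t+h)}-e^{\lambda_k(t-h)}$ is the (negated) centred second difference of $s\mapsto e^{\lambda_k s}$; the terms with $\lambda_k=0$ vanish and cause no difficulty. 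Writing $b_k(t,h)=-\int_{-h}^h(h-|u|)\lambda_k^2 e^{\lambda_k(t+u)}\,du$ and using $|e^{\lambda_k s}|\le\mu$ on $[0,T]$, I record the two bounds $|b_k(t,h)|\le\min\!\big(4\mu,\,\mu h^2|\lambda_k|^2\big)$, valid uniformly over the level-$K$ midpoints $t$ since there $t\pm h\in[0,T]$.

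The second step is the batch sum. Using the triangle inequality in the finite-dimensional space $\sY$ together with assumption (iii) in the form $\norm{Ce_k}_{\sY}\le c|\lambda_k|^{\gamma}$, the uniform bound on $b_k$ makes $\norm{C_h(t)\xi}_{\sY}$ independent of $t$, and a Cauchy--Schwarz split against the weight $1+|\lambda_k|^2$ yields
\[
\norm{C_h(t)\xi}_{\sY}^2 \le \frac{c^2\mu^2}{4}\,\norm{\xi}_{\dom(A)}^2 \sumk \frac{\min(4,h^2|\lambda_k|^2)^2}{|\lambda_k|^{2-2\gamma}\,(1+|\lambda_k|^2)}.
\]
Each of the $2^{K-1}n=T/(2h)$ level-$K$ midpoints obeys this same bound, so summing merely multiplies the right-hand side by $T/(2h)$. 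The whole problem thus reduces to the small-$h$ asymptotics of the scalar sum $\Sigma(h):=\sumk \min(4,h^2|\lambda_k|^2)^2\,|\lambda_k|^{2\gamma-2}(1+|\lambda_k|^2)^{-1}$. (The cruder route through \eqref{eq:C_hder} would only give $k=3$ while discarding the boundedness of $C$, and it fails for $\gamma>0$; this is why the eigenexpansion is needed.)

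For the third step I split $\Sigma(h)$ at the index where $|\lambda_k|\approx 1/h$. On the low part the minimum equals $h^2|\lambda_k|^2$, contributing $\approx h^4\sum_{|\lambda_k|\le 1/h}|\lambda_k|^{2\gamma-4}$; on the high part the minimum is $4$, contributing $\approx\sum_{|\lambda_k|>1/h}|\lambda_k|^{2\gamma-4}$. Feeding in the spectral law $|\lambda_k|\asymp k^{\delta}$ of assumption (ii) (with $\lim_k|\lambda_k|/k^{\delta}=\Gamma\in(0,\infty)$ in the first case), both partial sums evaluate to the same order $h^{4-2\gamma-1/\delta}$; the high tail converges because $(4-2\gamma)\delta>1$, which holds automatically for $\delta>1/2$ and $\gamma<1$. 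Hence $\Sigma(h)\asymp h^{4-2\gamma-1/\delta}$, and the batch sum is bounded by a constant multiple of $T\,h^{3-2\gamma-1/\delta}\norm{\xi}_{\dom(A)}^2$, i.e.\ \eqref{eq:batch} holds with $k=3-2\gamma-1/\delta$ and a constant $M$ proportional to $T$. The condition $2\gamma+1/\delta<2$ of (iii) is precisely $k>1$, so Lemma~\ref{lma:batch} applies; the extra factor $T$ in $M$ combines with the lemma's $T^{k-1}/n^{k-1}$ to produce the stated $T^{3-2\gamma-1/\delta}/n^{2-2\gamma-1/\delta}$, fixing the constant in \eqref{eq:constant_M}.

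I expect the delicate point to be this spectral sum, and especially the second case, where only the one-sided comparisons $|\lambda_k|\gtrsim k^{\delta-\epsilon}$ and $|\lambda_k|\lesssim k^{\delta+\epsilon}$ are available. There I would run the same split but estimate the low part with $\delta+\epsilon$ and the high tail with $\delta-\epsilon$, always choosing the exponent that keeps each estimate an upper bound; this replaces $1/\delta$ by $1/(\delta+\epsilon)$ in the power of $T$ and by $1/(\delta-\epsilon)$ in the power of $n$, and the spectral-sum constants become the $\epsilon$-dependent $M_{\epsilon}$, valid for $\epsilon$ in the admitted range. A secondary technical point is to justify the termwise formula for $C_h(t)\xi$: this follows because $C_h(t)$ is bounded on $\sX$ (as shown after \eqref{eq:C_h}) and each $e_k\in\dom(A)$, so the series converges in $\sX$ and may be applied term by term, while the finiteness of the $\dom(A)$-norm bound is secured by assumption (i) together with Proposition~\ref{prop:X_1}.
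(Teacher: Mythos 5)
Your proposal is correct and follows essentially the same route as the paper's proof: diagonalize $C_h(t)$ in the eigenbasis, bound the scalar second difference by $\min\bigl(4\mu/|\lambda_k|,\,\mu h^2|\lambda_k|\bigr)$, split the eigenvalue sum at $|\lambda_k|\approx 1/h$ (index $n(h)\approx h^{-1/\delta}$), apply Cauchy--Schwarz together with the spectral asymptotics to get $\norm{C_h(t)\xi}_{\sY}^2\lesssim h^{4-2\gamma-1/\delta}\norm{\xi}_{\dom(A)}^2$ uniformly in $t$, multiply by $2^{K-1}n=T/(2h)$ and invoke Lemma~\ref{lma:batch}, with the degenerate spectral case handled by the same $\delta\pm\epsilon$ perturbation. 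One small slip: on the low part of your $\Sigma(h)$ the summand is $h^4|\lambda_k|^{4}\cdot|\lambda_k|^{2\gamma-2}(1+|\lambda_k|^2)^{-1}\approx h^4|\lambda_k|^{2\gamma}$, not $h^4|\lambda_k|^{2\gamma-4}$ as written --- with the corrected exponent that part is indeed of order $h^{4-2\gamma-1/\delta}$ as you claim, so the conclusion stands.
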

\noindent For example, 1D wave equation on interval $[0,L]$
with Dirichlet boundary conditions in the natural state space where
some pointwise value of the state is observed, satisfies the
assumptions of the above theorem with $\delta=1$ and $\gamma=0$. The
limit of $\frac{|\lambda_k|}{k}$ as $k \to \infty$ exists and
it is $\Gamma=\frac{\pi}{2L}$. This would imply convergence rate
$\Ex{\norm{\hat{z}_{T,n}-\hat{z}(T) }_{\sX}^2} \le \frac{MT^2}{n}$.

\begin{proof}
Assume first that $\lim_{k \to \infty}\frac{|\lambda_k|}{k^{\delta}}=\Gamma \in
  (0,\infty)$. Denote $\xi =\sumk \alpha_k e_k \in \dom(A)$ which is equivalent to $\sumk |\lambda_k|^2 \alpha_k^2 \le \infty$. Now 
  \begin{equation} \label{eq:C_hx} C_h\xi=\sumk \frac{\alpha_k}{2}
  \left( \int_{t-h}^t e^{\lambda_ks}ds- \! \int_t^{t+h} \!
    e^{\lambda_ks}ds \right)Ce_k.
\end{equation}
For the term inside parentheses, we have 
\begin{equation} \nonumber \left| \int_{t-h}^t
    e^{\lambda_ks}ds-\int_t^{t+h}
    e^{\lambda_ks}ds\right| \le h^2\sup_{s \ge 0}
  \left|\frac{d}{ds}e^{\lambda_ks} \right| \le
 \mu h^2|\lambda_k|,
\end{equation}
since $\norm{e^{At}}_{\mathcal{L}(\sX)} \le \mu$.
On the other hand, computing the integrals yields
\begin{equation} \nonumber \left| \int_{t-h}^t
    e^{\lambda_ks}ds-\int_t^{t+h}
    e^{\lambda_ks}ds\right| \le \frac{4\mu}{|\lambda_k|}.
\end{equation}
Now the idea is to bound the sum in \eqref{eq:C_hx} by using the first
bound for small $k$ and the latter for large $k$. Define the index
$n(h):=\lceil h^{-1/\delta} \rceil$ for splitting the sum to get
\begin{align*} \norm{C_h\xi}_{\sY} &\le
  \sum_{k=1}^{n(h)}\frac{|\alpha_k|}{2}\mu\norm{Ce_k}_{\sY}|\lambda_k|h^2
  \, + \!\!\!\!\! \sum_{k=n(h)+1}^{\infty}\!\!\!\!\!
  |\alpha_k|\norm{Ce_k}_{\sY}\frac{2\mu}{|\lambda_k|}=:(I)+(II). 
\end{align*}
We then proceed to find upper bounds for the two parts. Using
Cauchy-Schwartz inequality and denoting $\hat{\Gamma}:=\sup_k
\frac{|\lambda_k|}{k^{\delta}}$ gives
\begin{align*} (I) &\le \mu\frac{h^2}2
  \left(\sum_{k=1}^{n(h)}\alpha_k^2\norm{Ce_k}_{\sY}^2|\lambda_k|^{2-2\gamma}
  \right)^{\!\! 1/2} \!   \left(\sum_{k=1}^{n(h)}|\lambda_k|^{2\gamma} \right)^{\!\! 1/2}\le \frac{\mu h^2\hat{\Gamma}^{\gamma}}2 
    M_I \left( \sum_{k=1}^{n(h)} k^{2\gamma\delta}\right)^{\!\! 1/2} 
\end{align*}
where
$M_I=\left(\sum_{k=1}^{n(h)}\alpha_k^2\norm{Ce_k}_{\sY}^2|\lambda_k|^{2-2\gamma}
\right)^{1/2}$. The sum inside the parentheses can be bounded from
above by the integral $\int_0^{n(h)+1} x^{2 \gamma \delta} dx$ to get
\begin{align*}
(I)&
\le  \frac{\mu h^2\hat{\Gamma}^{\gamma}}{2\sqrt{2\gamma\delta+1}}M_I \sqrt{(n(h)+1)^{2\gamma\delta+1}} \le
  \frac{3^{\delta}\mu\hat{\Gamma}^{\gamma}}{2\sqrt{2\gamma\delta+1}}M_Ih^{2-\gamma-\frac1{2\delta}}
  \le \frac{3^{\delta}\mu\hat{\Gamma}^{\gamma}}2 M_I
  h^{2-\gamma-\frac1{2\delta}}
\end{align*}
where the last row follows from the facts that
\begin{equation} \nonumber \sqrt{(n(h)+1)^{2\gamma\delta+1}}\le
  \sqrt{(h^{-1/\delta}+2)^{2\gamma\delta+1}}=(1+2h^{1/\delta})^{\gamma\delta+\frac12}h^{-\gamma-\frac1{2\delta}} \le 3^{\delta}h^{-\gamma-\frac1{2\delta}}
\end{equation}
if $h \le 1$, and that $2\gamma\delta+1>1$.

For the second part, assume $|\lambda_k| \ge
\check{\Gamma}k^{\delta}$ for $k \ge n(h)+1$ where
$\check{\Gamma}=0.9\Gamma$ for example. Again, using Cauchy-Schwartz
inequality yields
\begin{align*} 
(II) &\le
  2\mu\left(\sum_{k=n(h)+1}^{\infty}\alpha_k^2\norm{Ce_k}_{\sY}^2|\lambda_k|^{2-2\gamma}\right)^{\!\! 1/2}\left(\sum_{k=n(h)+1}^{\infty}\frac1{|\lambda_k|^{4-2\gamma}}\right)^{\!\! 1/2} \\
  & \le \frac{2\mu}{\check{\Gamma}^{2-\gamma}}
  M_{II}\left(\sum_{k=n(h)+1}^{\infty}\frac1{k^{(4-2\gamma)\delta}}\right)^{\!\! 1/2} 
\end{align*}
where
$M_{II}=\left(\sum_{k=n(h)+1}^{\infty}\alpha_k^2\norm{Ce_k}_{\sY}^2|\lambda_k|^{2-2\gamma}\right)^{\!\!
  1/2}$. Now the sum inside the parentheses can be bounded from above
by the integral $\int_{n(h)}^{\infty}\frac1{x^{(4-2\gamma)\delta}}
dx$. Note that our assumptions on $\gamma$ and $\delta$ imply
$(4-2\gamma)\delta>2$. So we get
\begin{equation} \nonumber
  (II) \le \frac{2 \mu M_{II}}{\check{\Gamma}^{2-\gamma}\sqrt{(4-2\gamma)\delta-1}}
  \left( \frac1{n(h)^{(4-2\gamma)\delta-1}} \right)^{\!\! 1/2} 
   \le
  \frac{2\mu}{\check{\Gamma}^{2-\gamma}}M_{II}h^{2-\gamma-\frac1{2\delta}} 
\end{equation}
where in the last row we have used $n(h) \ge h^{-1/\delta}$.

Combining the bounds gives
\begin{align*}
 {\norm{C_h\xi}_{\sY}^2} & \le 2 \big((I)^2+(II)^2\big) \\ &\le 2 \mu^2 \left( M_I^2+M_{II}^2 \right)\max \left( \frac{9^{\delta}\hat{\Gamma}^{2\gamma}}4 , 
  \frac{4}{\check{\Gamma}^{4-2\gamma}}
\right)h^{4-2\gamma-1/\delta} \\
&\le
2\mu^2{\norm{A\xi}_{\sX}^2}\sup_k\frac{\norm{Ce_k}_{\sY}^2}{|\lambda_k|^{2\gamma}}
\max \left( \frac{9^{\delta}\hat{\Gamma}^{2\gamma}}4 , 
  \frac{4}{\check{\Gamma}^{4-2\gamma}}
\right)h^{4-2\gamma-1/\delta}
\end{align*}
where we have used
\begin{equation} \nonumber
M_I^2+M_{II}^2=\sum_{k=1}^{\infty}\alpha_k^2\norm{Ce_k}_{\sY}^2|\lambda_k|^{2-2\gamma} \le \sum_{k=1}^{\infty}|\lambda_k|^2\alpha_k^2\sup_j\frac{\norm{Ce_j}_{\sY}^2}{|\lambda_j|^{2\gamma}}.
\end{equation}

Note that we assumed that we could choose for example
$\check{\Gamma}=0.9\Gamma$. In some sense this is not our choice but
we need to make sure that the ``original'' $h=\frac{T}{2n}$ is small
enough so that $n(T/(2n))=\left( \frac{2n}{T} \right)^{1/\delta}$ is
such that there exists $\check{\Gamma}>0$ for which
$\frac{|\lambda_k|}{k^{\delta}} \ge \check{\Gamma}$ for $k \ge
n(T/(2n))$.

To get a bound for the sum in \eqref{eq:batch}, we simply multiply the bound obtained for $\norm{C_h\xi}_{\sY}^2$ by $2^{K-1}n=\frac{T}{2h}$ to get
\[
\sum_{j=2^{K-1}n+1}^{2^Kn} \!\!\! {\norm{C_h(t_j)\xi}_{\Ys}^2} \le \sup_k\frac{\norm{Ce_k}_{\sY}^2}{|\lambda_k|^{2\gamma}}
\max \left( \frac{9^{\delta}\hat{\Gamma}^{2\gamma}}4 , 
  \frac{4}{\check{\Gamma}^{4-2\gamma}}
\right)\mu^2T{\norm{\xi}_{\dom(A)}^2}h^{3-2\gamma-1/\delta}
\]
and so the result follows by Lemma~\ref{lma:batch} with
\begin{equation} \label{eq:constant_M} M=
  \frac{2\mu\Ex{\norm{\hat z_{T,n}-z(T)}_{\Xs}^2}\Ex{\norm{x}_{\dom(A)}^2}}{(2^{2-2\gamma-1/\delta} -1)\min(\eig(R))}\sup_k\frac{\norm{Ce_k}_{\sY}^2}{|\lambda_k|^{2\gamma}}\max
  \!  \left( \! \frac{9^{\delta}\hat{\Gamma}^{2\gamma}}4,
    \frac{4}{\check{\Gamma}^{4-2\gamma}} \! \right).
\end{equation}

In the case that $\lim_{k \to
  \infty}\frac{|\lambda_k|}{k^{\delta}}$ is 0, $\infty$, or it
does not exist, some modifications are required to the bounds of $(I)$
and $(II)$. In the bound for $(I)$, $\delta$ needs to be replaced by
$\delta+\epsilon$ and then $\hat{\Gamma}_{\epsilon}=\sup_k
\frac{|\lambda_k|}{k^{\delta+\epsilon}}<\infty$. In the bound
for $(II)$, $\delta$ needs to be replaced by $\delta-\epsilon$ and
then $\check{\Gamma}_{\epsilon}=\inf_{k \ge n(h)+1}
\frac{|\lambda_k|}{k^{\delta-\epsilon}}>0$.
\end{proof}
The assumption {\it (iii)} in the theorem differs from our minimal
assumption $C \in \mathcal{L}(\dom(A),\sY)$ which is equivalent to
$\left\{ \frac{\norm{Ce_k}_{\sY}}{|\lambda_k|} \right\} \in
l^2$ for unitarily diagonalizable $A$. It is possible to construct a
system for which $C \in \mathcal{L}(\dom(A),\sY)$ but {\it (iii)} does
not hold.

\begin{rmk} {\rm
  Theorem \ref{thm:unbounded} can be extended to $\gamma < 0$. In that
  case, when determining the bounds for $(I)$ and $(II)$, the
  computations are carried out as if $\gamma$ were zero.
This eventually leads to a bound $
\Ex{\norm{\hat{z}_{T,n}-\hat{z}(T) }_{\sX}^2} \le
\frac{MT^{3-1/\delta}}{n^{2-1/\delta}}$. Note that if assumption {\it
  (iii)} holds for $\gamma < -\frac1{2\delta}$ then $C$ is actually
bounded.}
\end{rmk}

\subsection{Admissible observation operator}

 In the next result we assume that the observation operator $C$ is admissible in the sense of Weiss \cite{Weiss:admissible}. One good example of systems that satisfy assumption (iii) in the following theorem
is provided by scattering passive boundary control systems, see the
article \cite{M-S:BNDR} by Malinen and Staffans. For a more extensive background, we refer to \cite{WPLS} by Staffans.

\begin{thm} \label{thm:unbounded2} Let $\hat{z}_{T,n}$ and
  $\hat{z}(T)$ be as defined above in \eqref{eq:estimates} and $u=0$.
Make the following assumptions:
\begin{itemize}
\item[(i)] $x \in \dom(A)$ almost surely; 
\item[(ii)] The orthonormal basis $\{ e_k \} \subset \sX$ is such that
  $e_k \in \dom(A^2)$ for every $k \in \mathbb{N}$ and there exists
  $\delta > 1/2$ such that for $\xi=\sumk \alpha_k e_k$ the norm given
  by $\sqrt{\sumk k^{2\delta} \alpha_k^2}$ is equivalent to the
  $\dom(A)$-norm and $\sqrt{\sumk k^{4\delta} \alpha_k^2}$ is
  equivalent to the $\dom(A^2)$-norm;
\item[(iii)] The observation operator is admissible, that is, for any $T \ge 0$ there exists $H_T \ge 0$,  such that
  $\norm{Ce^{A(\cdot)}x}_{L^2((0,T);\sY)} \le H_T \norm{x}_{\sX}$.
\end{itemize}
 Then
\begin{equation} \nonumber
\Ex{\norm{\hat{z}_{T,n}-\hat{z}(T)}_{\sX}^2}
    \le \frac{M(T) \, T^{2-1/2\delta}}{n^{1-1/2\delta}}
\end{equation}
with $M(T)=\frac{2 \Ex{\norm{\hat z_{T,n}-z(T)}_{\Xs}^2}\Ex{\norm{x}_{\dom(A)}^2}}{(2^{1-1/2\delta}-1)\min(\eig(R))} \max
\left( \frac{3^{2\delta+1}T\mu
    \norm{C}_{\mathcal{L}(\dom(A),\sY)}^2}{8\delta+4},\frac{H_T}{2\delta-1}
\right)$.
\end{thm}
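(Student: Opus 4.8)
The plan is to reduce everything to the batch estimate of Lemma~\ref{lma:batch}, applied with $\sX_1 = \dom(A)$. Assumption (i) guarantees $x \in \dom(A)$ almost surely, so by Proposition~\ref{prop:X_1} and Fernique's theorem $\Ex{\norm{x}_{\dom(A)}^2}$ is finite and the lemma is applicable once its hypothesis \eqref{eq:batch} is verified. The whole problem therefore becomes: show that for $h = T/(2^K n)$ and $\xi = \sumk \alpha_k e_k \in \dom(A)$,
\[
\sum_{j=2^{K-1}n+1}^{2^K n} \norm{C_h(t_j)\xi}_{\sY}^2 \le M h^{\kappa}\norm{\xi}_{\dom(A)}^2, \qquad \kappa = 2 - \tfrac1{2\delta},
\]
since then Lemma~\ref{lma:batch} delivers the rate $n^{-(\kappa-1)} = n^{-(1-1/2\delta)}$, and $\kappa > 1$ is exactly what $\delta > 1/2$ provides.

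To prove this I would split $\xi = \xi_{\mathrm{low}} + \xi_{\mathrm{high}}$ relative to the basis of assumption (ii), with $\xi_{\mathrm{low}} = \sum_{k \le n(h)}\alpha_k e_k$ and the cut-off chosen (this is the crucial choice) as $n(h) = \lceil h^{-1/2\delta}\rceil$. The two parts are handled by genuinely different mechanisms. For the smooth part I would use the pointwise bound \eqref{eq:C_hder}: since $C_h$ is a second difference of the semigroup it gains a factor $h^2$, and $C \in \mathcal{L}(\dom(A),\sY)$ together with $e_k \in \dom(A^2)$ and the equivalence $\norm{e_k}_{\dom(A^2)} \asymp k^{2\delta}$ from (ii) gives $\norm{C_h(t)e_k}_{\sY} \lesssim \mu \norm{C}_{\mathcal{L}(\dom(A),\sY)} h^2 k^{2\delta}$. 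A Cauchy--Schwarz step then yields $\norm{C_h(t)\xi_{\mathrm{low}}}_{\sY}^2 \lesssim h^4 \norm{\xi}_{\dom(A)}^2 \sum_{k\le n(h)} k^{2\delta}$, and bounding the partial sum by $\int_0^{n(h)+1}x^{2\delta}\,dx \asymp (2\delta+1)^{-1}n(h)^{2\delta+1}$ and multiplying by the number $2^{K-1}n = T/(2h)$ of level-$K$ points produces the first term of the maximum in $M(T)$, with its characteristic factor $(2\delta+1)^{-1}$.

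For the rough part I would instead exploit admissibility, which is where hypothesis (iii) finally enters. The decisive structural observation is that the intervals $[t_j - h, t_j + h]$ attached to the level-$K$ points are pairwise disjoint and tile $[0,T]$; applying Cauchy--Schwarz in time to \eqref{eq:C_h} and summing over $j$ collapses the local integrals into a single integral over $[0,T]$, so that $\sum_j \norm{C_h(t_j)e_k}_{\sY}^2 \lesssim h \int_0^T \norm{Ce^{As}e_k}_{\sY}^2\,ds \le h\, H_T^2$. Expanding $\xi_{\mathrm{high}}$ in the basis and a Cauchy--Schwarz step with weights $k^{\pm\delta}$ convert this into $\sum_j \norm{C_h(t_j)\xi_{\mathrm{high}}}_{\sY}^2 \lesssim h\, H_T^2 \norm{\xi}_{\dom(A)}^2 \sum_{k > n(h)} k^{-2\delta}$, and the convergent tail $\sum_{k>n(h)} k^{-2\delta} \asymp (2\delta-1)^{-1}n(h)^{1-2\delta}$ (finite precisely because $2\delta > 1$) gives the second term of the maximum, with the factor $(2\delta-1)^{-1}$.

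The reason for the choice $n(h) = \lceil h^{-1/2\delta}\rceil$ is that it equalizes the two $h$-powers: the low-frequency contribution scales like $h^{3-(2\delta+1)/2\delta} = h^{2-1/2\delta}$ and the high-frequency contribution like $h^{1+(2\delta-1)/2\delta} = h^{2-1/2\delta}$, so their sum gives exactly $\kappa = 2-1/2\delta$ and $M$ as the maximum of the two constants, matching the form of $M(T)$ in the statement. The main obstacle I anticipate is precisely the clash between the two regularity mechanisms: admissibility is an integrated, $L^2$-in-time condition and cannot be turned into a uniform pointwise increment bound, so it must be ``spent'' globally through the disjoint-tiling identity, whereas the smooth estimate is genuinely pointwise but too weak on high frequencies; reconciling them is what forces the split and its optimization. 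A secondary technical point is rigor: \eqref{eq:C_hder} and the identity $CAe^{Ar}\xi = Ce^{Ar}A\xi$ are classical only for $\xi \in \dom(A^2)$, so I would establish all estimates there first and then extend to $\xi \in \dom(A)$ by density, using that each $C_h(t_j) \in \mathcal{L}(\sX,\sY)$ and that both sides are continuous in the $\dom(A)$-norm.
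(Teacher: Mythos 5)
Your proposal is correct and follows essentially the same route as the paper's proof: the same cutoff $n(h)=\lceil h^{-1/(2\delta)}\rceil$, the same pointwise second-difference bound via \eqref{eq:C_hder} and the $\dom(A^2)$-norm equivalence for the low modes, and the same disjoint-interval/Cauchy--Schwarz reduction to the admissibility bound for the high modes, all fed into Lemma~\ref{lma:batch}. The only cosmetic difference is that the paper packages the sum over $j$ as a stacked operator $\widehat C_h$ acting into $\sY^{2^{K-1}n}$, which is exactly your tiling argument in different notation.
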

\begin{proof}
  In this proof, the aforementioned norms are used in $\dom(A)$ and
  $\dom(A^2)$. We need to utilize the global output bound
  $\norm{Ce^{A(\cdot)}x}_{L^2((0,T);\sY)} \le H_T \norm{x}_{\sX}$. To this end,
  define a stacked operator $\widehat
  C_h:=[C_h(h),C_h(3h),\dots,C_h(T-h)]^T$ for $h=\frac{T}{2^Kn}$
  mapping to a product space $\sY^{2^{K-1}n}$. Then the sum on the left hand side of \eqref{eq:batch} is obtained as $\bnorm{\widehat C_h \xi}_{\sY^{2^{K-1}n}}^2$.
    In this proof, $\big[ a_i \big]_{i=1}^N$ is used to
  denote an augmented vector with $N$ components $a_i$. 

The
  proof proceeds similarly as the proof of Theorem~\ref{thm:unbounded}
  but the sum in \eqref{eq:C_hx} is split using the index $n(h)=\lceil
  h^{-1/(2\delta)}\rceil$ to get $\bnorm{\widehat C_h\xi}_{\sY^{2^{K-1}n}} \le (I)+(II)$ where
\begin{align*}
   (I) &=
\norm{\sum_{k=1}^{n(h)} \frac{\alpha_k}2  \left[
      C  \int_{(2j-2)h}^{(2j-1)h} \!\! e^{As}e_k \, ds -
      C  \int_{(2j-1)h}^{2jh} \! e^{As}e_k \, ds
    \right]_{i=1}^{2^{K-1}n}}_{\sY^{2^{K-1}n}} \\
  &\le \sum_{k=1}^{n(h)}\frac{|\alpha_k|}2 \sqrt{\frac{T}{2h}}h^2
  \mu\norm{C}_{\mathcal{L}(\dom(A),\sY)}k^{2\delta} \\ & \le
  \sqrt{\frac{Th^3}{8}}\mu\norm{C}_{\mathcal{L}(\dom(A),\sY)}
  \left(\sum_{k=1}^{n(h)}k^{2\delta}\alpha_k^2\right)^{\!\! 1/2} \!\! \left(\sum_{k=1}^{n(h)}k^{2\delta}\right)^{\!\! 1/2}
\end{align*}
where the first inequality is obtained using \eqref{eq:C_hder} with $t=(2j-1)h$ and bounding the derivative of $Ce^{Ar}e_k$ by
\begin{equation} \nonumber \norm{ CAe^{Ar}e_k}_{\sY} \le
    \mu\norm{C}_{\mathcal{L}(\dom(A),\sY)} \norm{e_k}_{\dom(A^2)} =
    \mu\norm{C}_{\mathcal{L}(\dom(A),\sY)}k^{2\delta}
\end{equation}
 and noting that $2^{K-1}n=\frac{T}{2h}$. 

 For the remaining part it holds that
\begin{align*}
  (II)&=\norm{\sum_{k=n(h)+1}^{\infty} \! \frac{\alpha_k}2 \left[
      C\int_{(2i-2)h}^{(2i-1)h} \! e^{As}e_k \, ds -
      C\int_{(2i-1)h}^{2ih} \! e^{As}e_k \, ds
    \right]_{i=1}^{2^{K-1}n}}_{\sY^{2^{K-1}n}} \\
  &\le \!\! \sum_{k=n(h)+1}^{\infty} \! \frac{|\alpha_k|}2\sqrt{2h} H_T \le
  \sqrt{\frac{h}2} H_T
  \left(\sum_{k=n(h)+1}^{\infty} \!\! k^{2\delta}\alpha_k^2\right)^{\!\! 1/2} \!\! \left(\sum_{k=n(h)+1}^{\infty}\!\! k^{-2\delta}\right)^{\!\! 1/2}
\end{align*}
since it holds that
\begin{align*}
  &\norm{\left[ C\int_{(2i-2)h}^{(2i-1)h}e^{As}e_k \, ds -
      C\int_{(2i-1)h}^{2ih}e^{As}e_k \, ds
    \right]_{i=1}^{2^{K-1}n}}_{\sY^{2^{K-1}n}} \\
  & \le \left(\sum_{i=1}^{2^{K-1}n}
    \left(\int_{(2i-2)h}^{2ih}\norm{Ce^{As}e_k}_{\sY} ds \right)^{\!\! 2 \,}
  \right)^{\!\! 1/2} \le \sqrt{2h} \norm{Ce^{As}e_k}_{L^2((0,T);\sY)}
\end{align*}
where the last inequality follows from Cauchy-Schwartz inequality. Finally, the sums $\sum_{k=1}^{n(h)}k^{2\delta}$ and $\sum_{k=n(h)+1}^{\infty}\! k^{-2\delta}$ are bounded by integrals of $x^{2\delta}$ and $x^{-2\delta}$, respectively, as in the proof of Theorem~\ref{thm:unbounded} and
then the result is obtained by Lemma~\ref{lma:batch} and $\bnorm{\widehat C_h\xi}_{\sY^{2^{K-1}n}}^2 \le 2\big((I)^2+(II)^2\big)$.
\end{proof}

\subsection{Analytic semigroup}

In this section we show the convergence estimate when $A$ is the
generator of an analytic semigroup. One result is first shown without
additional assumptions for bounded and unbounded observation operator
$C$. Then we assume further that $-A$ is a sectorial operator
in $\sX$ which enables us to treat non-integer
powers $(-A)^{\eta}$ for $\eta \ge 0$. An example of such case is
provided by heat equation treated below in Example~\ref{ex:heat}.

An important tool here is that for analytic semigroups it holds
that
\begin{equation} \label{eq:an} \norm{A^{\kappa}e^{At}}_{\mathcal{L}(\sX)}\le
  \frac{c(\kappa)}{t^{\kappa}}, \qquad t > 0, \ \kappa \in \mathbb{N}
\end{equation}
(see \cite[Theorem 3.3.1]{tanabe}). Using this to bound the derivative of $Ce^{At}\xi$, that is $CAe^{At}\xi$, gives by \eqref{eq:C_hder},
\begin{equation} \label{eq:C_han} \norm{C_h(t)\xi}_{\sY} \le \frac{c(1)\norm{C}_{\mathcal{L}(\sX/\dom(A),\sY)}
    \norm{\xi}_{\sX/\dom(A)}}{2(t-h)}
h^2, \qquad t>h.
\end{equation}
\begin{thm} \label{thm:analytic1} Let $\hat{z}_{T,n}$ and
  $\hat{z}(T)$ be as defined above in \eqref{eq:estimates}. Assume $A$
  is the generator of an analytic  $C_0$-semigroup and
  assume either
\begin{itemize}
\item[(i)] $C \in \mathcal{L}(\sX,\sY)$, or
\item[(ii)] $C \in \mathcal{L}(\dom(A),\sY)$ and $x \in \dom(A)$ almost surely.
\end{itemize}
Then
\begin{equation} \nonumber
\Ex{\norm{\hat{z}_{T,n}-\hat{z}(T)}_{\sX}^2} \le \frac{MT}{n}
\end{equation}
where  $M=\frac{2\norm{C}_{\mathcal{L}(\sX/\dom(A),\sY)}^2}{\min(\eig(R))} \left(\mu^2+\frac{c(1)^2\pi^2}{96} \right)
\Ex{\norm{z(T)-\hat z_{T,n}}_{\sX}^2}\Ex{\norm{x}_{\sX/\dom(A)}^2}
$.
\end{thm}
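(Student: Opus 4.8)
The plan is to establish the batch estimate \eqref{eq:batch} of Lemma~\ref{lma:batch} with exponent $k=2$ and with $\sX_1=\sX$ in case (i), respectively $\sX_1=\dom(A)$ in case (ii) (both written uniformly below as $\sX/\dom(A)$, following the convention of \eqref{eq:C_han}). Once \eqref{eq:batch} holds with $k=2$, the conclusion is immediate: Lemma~\ref{lma:batch} gives the rate $T^{k-1}/n^{k-1}=T/n$, the factor $2^{k-1}-1$ equals $1$, and the batch constant multiplied by the prefactor of Lemma~\ref{lma:batch} reproduces exactly the claimed $M$. The whole difficulty is thus to bound $\sum_{j=2^{K-1}n+1}^{2^Kn}\norm{C_h(t_j)\xi}_{\sY}^2$ by a constant times $\norm{\xi}_{\sX/\dom(A)}^2 h^2$ for each $K$, and the analytic-semigroup bound \eqref{eq:C_han} is the main tool.

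I would first note that the $2^{K-1}n$ time points in the $K$-th block are the odd multiples $t=(2m-1)h$, $m=1,\dots,2^{K-1}n$, of $h=\frac{T}{2^Kn}$, so the smallest of them is $t=h$. At this single point the bound \eqref{eq:C_han} degenerates, its denominator $t-h$ vanishing, so this term must be treated separately. Bounding $C_h(h)\xi$ directly from its definition \eqref{eq:C_h}, using $\norm{e^{As}}_{\mathcal{L}(\sX)}\le\mu$ in case (i) and the corresponding bound on $\dom(A)$ in case (ii) (which holds since $e^{As}$ commutes with $A$), gives $\norm{C_h(h)\xi}_{\sY}\le \mu\norm{C}_{\mathcal{L}(\sX/\dom(A),\sY)}\norm{\xi}_{\sX/\dom(A)}\,h$, hence a contribution $\mu^2\norm{C}_{\mathcal{L}(\sX/\dom(A),\sY)}^2\norm{\xi}_{\sX/\dom(A)}^2 h^2$ to the sum.

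For the remaining points $t=(2m-1)h$ with $m\ge2$ one has $t-h=2(m-1)h>0$, so \eqref{eq:C_han} applies and yields $\norm{C_h((2m-1)h)\xi}_{\sY}\le \frac{c(1)\norm{C}_{\mathcal{L}(\sX/\dom(A),\sY)}\norm{\xi}_{\sX/\dom(A)}}{4(m-1)}\,h$. Squaring and summing over $m\ge2$, the series $\sum_{m\ge2}(m-1)^{-2}=\sum_{\ell\ge1}\ell^{-2}=\pi^2/6$ converges, producing the contribution $\frac{c(1)^2\pi^2}{96}\norm{C}_{\mathcal{L}(\sX/\dom(A),\sY)}^2\norm{\xi}_{\sX/\dom(A)}^2 h^2$. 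Adding the two contributions gives \eqref{eq:batch} with $k=2$ and batch constant $\big(\mu^2+\frac{c(1)^2\pi^2}{96}\big)\norm{C}_{\mathcal{L}(\sX/\dom(A),\sY)}^2$, whence Lemma~\ref{lma:batch} concludes. The one genuine obstacle is the degeneracy of \eqref{eq:C_han} at $t=h$: it forces the separate direct estimate of the first point, and it is the origin of the $\mu^2$ summand in $M$ (the $\frac{c(1)^2\pi^2}{96}$ summand coming from the genuinely interior points, where summability of $(m-1)^{-2}$ is exactly what keeps the block sum finite).
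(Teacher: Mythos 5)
Your proposal is correct and follows essentially the same route as the paper's own proof: the $l=1$ term is bounded directly from the definition \eqref{eq:C_h} giving the $\mu^2$ contribution, the terms $l\ge 2$ are bounded via \eqref{eq:C_han} with denominator $4h(l-1)$ giving the $\frac{c(1)^2\pi^2}{96}$ contribution, and Lemma~\ref{lma:batch} with $k=2$ yields the stated constant. Nothing is missing.
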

\begin{proof}
  The proofs for the two cases are identical so only the case 
    (i) is presented. In the second case just replace $\sX$ by $\dom(A)$ in
  $\norm{\xi}_{\sX}$ and $\norm{C}_{\mathcal{L}(\sX,\sY)}$.

The proof is based on Lemma~\ref{lma:batch}, so we need to find a bound for
 \begin{equation}  \label{eq:sumbd}
 \sum_{j=2^{K-1}n+1}^{2^Kn} {\norm{C_h(t_j)\xi}_{\sY}^2}=\sum_{l=1}^{2^{K-1}n} \norm{C_h\big((2l-1)h\big)\xi}_{\sY}^2
 \end{equation}
 For $l=1$, we use
$\norm{C_h(h)\xi}_{\sY} \le h\mu
\norm{C}_{\mathcal{L}(\sX,\sY)}\norm{\xi}_{\sX}$ which is clear from the definition of $C_h(t)$ in \eqref{eq:C_h}. For $l > 1$, we use \eqref{eq:C_han} where the denominator becomes $2\big((2l-1)h-h\big)=4h(l-1)$ and so,
\begin{align*} 
 \sum_{j=2^{K-1}n+1}^{2^Kn} \!\! {\norm{C_h(t_j)\xi}_{\sY}^2}
    & \le \norm{C}_{\mathcal{L}(\sX,\sY)}^2
   {\norm{\xi}_{\sX}^2}
    \left(\mu^2+\frac{c(1)^2}{16} \!\! \sum_{l=2}^{ \ \ 2^{K-1}n } \!\! \frac1{(l-1)^2} \right)h^2  \\
& \le
\norm{C}_{\mathcal{L}(\sX,\sY)}^2 \norm{\xi}_{\sX}^2
  \left(\mu^2+\frac{c(1)^2\pi^2}{96} \right)h^2.
\end{align*}
The result follows by Lemma~\ref{lma:batch}.
\end{proof}

One more case is treated where $A$ is as before and, in addition,
$-A$ is a sectorial operator, see \cite[Section~3.8]{Arendt} for
definitions.
Then it is possible to define non-integer powers $(-A)^{\eta}$ where
$\eta \in \mathbb{R}$ and spaces $\dom ((-A)^{\eta})$ equipped with
the corresponding graph norm. Also \eqref{eq:an} holds then for
non-integer $\kappa \ge 0$ if $A$ is replaced by $-A$, see
\cite[Thm.~3.3.3]{tanabe}. In particular, if $A$ is strictly negative
definite, then it is sectorial. This type of systems are also studied
in \cite{DaPrato_Ichikawa} and \cite{Flandoli86}.

\begin{thm} \label{thm:analytic2} Let $\hat{z}_{T,n}$ and
  $\hat{z}(T)$ be as defined above in \eqref{eq:estimates}. Assume that
  $A$ is the generator of an analytic $C_0$-semigroup
  and, in addition, $-A$ is a sectorial operator. Then assume $C \in
  \mathcal{L}(\dom((-A)^{\nu}),\sY)$ and $x \in \dom((-A)^{\eta})$
  almost surely where $\nu \in \mathbb{R}$ and $\eta\in \mathbb{R}$
  are such that $|\eta-\nu|<1/2$. Then\footnote{This result extends to
    $\eta-\nu=1/2$ in which case the convergence rate is
    $\mathcal{O}(T^2n^{-2}\ln n)$.}
\begin{equation} \nonumber \Ex{\norm{\hat{z}_{T,n}-\hat{z}(T)}_{\sX}^2}
  \le \frac{MT^{1+2(\eta-\nu)}}{n^{1+2(\eta-\nu)}}
\end{equation}
where $M$ is given below in \eqref{eq:Man}.
\end{thm}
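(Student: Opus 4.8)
The plan is to apply Lemma~\ref{lma:batch} with $\sX_1=\dom((-A)^{\eta})$, so that everything reduces to verifying the batch estimate \eqref{eq:batch} with exponent $k=2+2(\eta-\nu)$; since $|\eta-\nu|<1/2$ we have $k>1$ and $k-1=1+2(\eta-\nu)$, which is exactly the exponent in the claimed rate. The starting point is a fractional-power refinement of the derivative bound behind \eqref{eq:C_han}. Setting $\beta:=1+\nu-\eta\in(1/2,3/2)$ and factoring through the fractional powers, for $\xi\in\dom((-A)^{\eta})$ and $r>0$ I would estimate, up to the graph-norm equivalence constant,
\[
\norm{CAe^{Ar}\xi}_{\sY}\le\norm{C}_{\mathcal{L}(\dom((-A)^{\nu}),\sY)}\,\norm{(-A)^{\beta}e^{Ar}(-A)^{\eta}\xi}_{\sX}\le\frac{c(\beta)\,\norm{C}_{\mathcal{L}(\dom((-A)^{\nu}),\sY)}}{r^{\beta}}\,\norm{(-A)^{\eta}\xi}_{\sX},
\]
where the last step is \eqref{eq:an} applied to the sectorial operator $-A$ with the non-integer exponent $\beta\ge0$. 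This replaces the $r^{-1}$ singularity of the analytic case by an $r^{-\beta}$ singularity that carries the gap $\eta-\nu$.

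For the bulk of the sum in \eqref{eq:batch}, the terms $t_j=(2l-1)h$ with $l\ge2$ in \eqref{eq:sumbd}, the argument mirrors Theorem~\ref{thm:analytic1}: here $r\ge t_j-h=2(l-1)h>0$ on the whole region of integration in \eqref{eq:C_hder}, so I would pull $r^{-\beta}$ out at its largest value $(2(l-1)h)^{-\beta}$ and integrate the remaining constant over the triangular region to produce the area $h^{2}/2$. This gives $\norm{C_h((2l-1)h)\xi}_{\sY}\lesssim h^{2-\beta}(l-1)^{-\beta}\norm{(-A)^{\eta}\xi}_{\sX}$ and, after squaring, a contribution bounded by $h^{4-2\beta}(l-1)^{-2\beta}=h^{k}(l-1)^{-2\beta}$ times constants. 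Summing over $l\ge2$ uses $\sum_{l\ge2}(l-1)^{-2\beta}<\infty$, which holds precisely because $2\beta=2-2(\eta-\nu)>1$; this is where the strict inequality $\eta-\nu<1/2$ is needed (at $\eta-\nu=1/2$ the series diverges logarithmically, which accounts for the extra $\ln n$ in the footnote).

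I expect the endpoint term $l=1$, that is $t_j=h$, to be the main obstacle, because there $t_j-h=0$ and the singular factor $r^{-\beta}$ reaches the origin. When $\eta>\nu$ (so $\beta<1$) the map $r\mapsto\norm{CAe^{Ar}\xi}_{\sY}$ is still integrable near $0$, so I would use \eqref{eq:C_hder} directly and evaluate $\int_0^h\!\int_s^h r^{-\beta}\,dr\,ds$ and $\int_h^{2h}\!\int_h^s r^{-\beta}\,dr\,ds$, each of which is $O(h^{2-\beta})=O(h^{1+\eta-\nu})$, yielding $\norm{C_h(h)\xi}_{\sY}^2\lesssim h^{k}\norm{(-A)^{\eta}\xi}_{\sX}^2$. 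When $\eta\le\nu$ (so $\beta\ge1$) the derivative is no longer integrable at $0$ and \eqref{eq:C_hder} is unavailable, so I would instead bound $C_h(h)\xi$ \emph{before} differentiating: factoring $(-A)^{\nu}e^{As}(-A)^{-\eta}=(-A)^{\nu-\eta}e^{As}$ and applying \eqref{eq:an} with the non-negative exponent $\nu-\eta$ gives $\norm{(-A)^{\nu-\eta}e^{As}(-A)^{\eta}\xi}_{\sX}\le c(\nu-\eta)\,s^{-(\nu-\eta)}\norm{(-A)^{\eta}\xi}_{\sX}$; integrating $\int_0^h s^{\eta-\nu}\,ds$ (finite since $\eta-\nu>-1/2>-1$) again produces $O(h^{1+\eta-\nu})$ and hence $h^{k}$ after squaring.

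Collecting the $l\ge2$ and $l=1$ contributions verifies \eqref{eq:batch} with $\sX_1=\dom((-A)^{\eta})$ and $k=2+2(\eta-\nu)$, and Lemma~\ref{lma:batch} then yields the claimed bound at the rate $T^{1+2(\eta-\nu)}n^{-(1+2(\eta-\nu))}$, with the explicit constant $M$ obtained by tracking these estimates through the conclusion of that lemma. Here $\Ex{\norm{x}_{\dom((-A)^{\eta})}^2}$ is finite by Proposition~\ref{prop:X_1} together with Fernique's theorem, and throughout I use that the graph norm of $\dom((-A)^{\eta})$ is equivalent to $\norm{(-A)^{\eta}\cdot}_{\sX}$ up to the lower-order term $\norm{\cdot}_{\sX}$.
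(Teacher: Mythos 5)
Your proposal is correct and follows essentially the same route as the paper: reduce to Lemma~\ref{lma:batch} with $\sX_1=\dom((-A)^{\eta})$ and $k=2+2(\eta-\nu)$, use the fractional-power estimate $\norm{CAe^{Ar}\xi}_{\sY}\lesssim r^{-(1-\eta+\nu)}\norm{\xi}_{\dom((-A)^{\eta})}$ (the paper's \eqref{eq:C_hni}) for the terms $l\ge 2$, sum the resulting series $\sum_{l\ge 2}(l-1)^{-2(1-\eta+\nu)}$ under $\eta-\nu<1/2$, and handle the endpoint $l=1$ by the same case split on the sign of $\eta-\nu$ (integrable derivative singularity via \eqref{eq:C_hder} when $\eta>\nu$, direct bound on $\norm{Ce^{As}\xi}_{\sY}\lesssim s^{-(\nu-\eta)}$ from the definition \eqref{eq:C_h} when $\eta\le\nu$). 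The only differences are cosmetic (working with $\norm{(-A)^{\eta}\xi}_{\sX}$ rather than the graph norm, and not tracking the explicit constant \eqref{eq:Man}).
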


\begin{proof}
  This is done exactly as the proof of Theorem~\ref{thm:analytic1}
  above. Just the bounds for $\norm{C_h((2l-1)h)\xi}_{\sY}$ in
  the summation \eqref{eq:sumbd} are computed differently. To begin
  with, we note that \eqref{eq:an} with non-integer
  $\kappa=1-\eta+\nu$ yields,
  \begin{equation} \label{eq:C_hni} \norm{CAe^{At}\xi}_{\sY}
    \le \norm{C}_{\mathcal{L}(\dom((-A)^{\nu}),\sY)}\norm{\xi}_{\dom((-A)^{\eta})} \!
    \frac{c(1-\eta+\nu)}{t^{1-\eta+\nu}}.
\end{equation}

When treating the term with $l=1$ in \eqref{eq:sumbd}, the cases $\nu
\ge \eta$ and $\nu < \eta$ have to be considered separately. First for
$\nu \ge \eta$, 
  \begin{align*}
    \norm{Ce^{At}\xi}_{\sY} &=\norm{C(-A)^{-\nu}(-A)^{\nu-\eta}e^{At}(-A)^{\eta}\xi}_{\sY} \\
    &\le \norm{C}_{\mathcal{L}(\dom((-A)^{\nu}),\sY)}\norm{\xi}_{\dom((-A)^{\eta})}\frac{c(\nu-\eta)}{t^{\nu-\eta}}.
\end{align*}
Then directly by the definition of $C_h$ in \eqref{eq:C_h} (recalling $1+\eta-\nu >0$),
\begin{align*} 
  \norm{C_h(h)\xi}_{\sY}
  &\le\norm{C}_{\mathcal{L}(\dom((-A)^{\nu}),\sY)}\! \norm{\xi}_{\dom((-A)^{\eta})}c(\nu-\eta)
  \int_0^{2h} \! \frac1{s^{\nu-\eta}} ds \\
  &\le\norm{C}_{\mathcal{L}(\dom((-A)^{\nu}),\sY)}\! \norm{\xi}_{\dom((-A)^{\eta})}\frac{c(\nu-\eta)}{1+\eta-\nu}(2h)^{1+\eta-\nu}.
\end{align*}
For $\nu <\eta < 1+\nu$, use \eqref{eq:C_hder} with $t=h$. Using \eqref{eq:C_hni} to bound the derivative norm $\norm{CAe^{At}\xi}_{\sY}$ and computing the integrals yields
\begin{align*}  
  \norm{C_h(h)\xi}_{\sY}  & \le
  \norm{C}_{\mathcal{L}(\dom((-A)^{\nu}),\sY)}\norm{\xi}_{\dom((-A)^{\eta})}
  \frac{c(1-\eta+\nu)}{2(\eta-\nu)}\frac{2^{1+\eta-\nu}-2}{1+\eta-\nu}h^{1+\eta-\nu} \\
  &\le \norm{C}_{\mathcal{L}(\dom((-A)^{\nu}),\sY)}\norm{\xi}_{\dom((-A)^{\eta})} \frac{2 \ln 2 \,
    c(1-\eta+\nu)}{1+\eta-\nu}h^{1+\eta-\nu}.
\end{align*}

For the terms with $l>1$, it holds by \eqref{eq:C_hder} and \eqref{eq:C_hni}, that
\[
\norm{C_h((2l-1)h)\xi}_{\sY} \le 
    \frac{h^2\norm{C}_{\mathcal{L}(\dom((-A)^{\nu}),\sY)}c(1-\eta+\nu)}{2(2h)^{1-\eta+\nu}(l-1)^{1-\eta+\nu}}\norm{\xi}_{\dom((-A)^{\eta})}.
\]
Now summing up the bounds for $\norm{C_h((2l-1)h)\xi}_{\sY}^2$ for $l=1,...,2^{K-1}n$ and using $\sum_{l=2}^{\infty} \frac{1}{(l-1)^{2+2(\nu-\eta)}} \le
\frac{2-2(\eta-\nu)}{1-2(\eta-\nu)}$ yields a bound for the sum in \eqref{eq:batch}, and finally Lemma~\ref{lma:batch} can be used to get the result with
\begin{align} \label{eq:Man}
M=&\frac{2
  \norm{C}_{\mathcal{L}(\dom((-A)^{\nu}),\sY)}^2\Ex{\norm{z(T)-\hat
        z_{T,n}}_{\Xs}^2}\Ex{\norm{x}_{\dom((-A)^{\eta})}^2}}{(2^{1+2(\eta-\nu)}-1)\min(\eig(R))}\times 
 \\ \nonumber & \qquad \times\left(
    M_{\nu,\eta} + \frac{c(1- \eta+ \nu)^2}{2^{4-2(\nu-\eta)}}\frac{2- 2(\eta-
      \nu)}{1- 2(\eta- \nu)}   \right)
\end{align}
where the term with $l=1$ gives
\begin{equation} \nonumber M_{\nu,\eta}=\left\{ \begin{array}{ll}
      \frac{4 (\ln 2)^2c(1-\eta+\nu)^2}{(1+\eta-\nu)^2} & \textrm{if
      } \eta > \nu, \vspace{2mm} \\
      \frac{2^{2+2(\eta-\nu)}c(\nu-\eta)^2}{(1+\eta-\nu)^2} &
      \textrm{if } \eta \le \nu.
   \end{array} \right.
\end{equation}
\end{proof}
\begin{exm} \label{ex:heat}   {\rm
Consider the 1D heat equation
\begin{equation} \nonumber
\begin{cases}
  \frac{\partial}{\partial t} z(x,t)=\frac{\partial^2}{\partial x^2}z(x,t), \quad x \in [0,1], \\
  z(0,t)=z(1,t)=0, \\
  z(x,0)=z_0, \\
  dy(t)=\frac{\partial}{\partial x}z(0,t) \, dt + dw(t)
\end{cases}
\end{equation}
with state space $\sX=L^2(0,1)$ and $\dom(A)=H_0^2[0,1]$. Assume $z_0
\in \dom(A)$ almost surely. Now the spectrum of $A$ is $\{ -\pi^2k^2
\}$ and the corresponding eigenvectors are $e_k=\sin(\pi k x)$. Then
it is easy to see that the assumptions of Theorem~\ref{thm:unbounded}
are satisfied with $\delta=2$ and $\gamma=1/2$ and thus the theorem
implies convergence rate $\mathcal{O}(n^{-1/2})$ for
$\Ex{\norm{\hat{z}_{T,n}-\hat z(T)}_{\sX}^2}$. The assumptions of Theorem~\ref{thm:unbounded2} are satisfied with $\delta=2$ implying convergence rate $\mathcal{O}(n^{-3/4})$, and Theorem~\ref{thm:analytic1} clearly implies convergence rate $\mathcal{O}(n^{-1})$ but we can do even better.

Denoting $z=\sumk \alpha_k e_k$ we have
$\norm{z}_{\dom((-A)^{\nu})}^2=\sumk k^{4\nu}\alpha_k^2$. For the
output it holds that
\begin{equation} \nonumber |Cz|^2=\left|\sumk\pi k \alpha_k \right|^2 \le
  \pi {\sumk \frac1{k^{1+\epsilon}}} \, {\sumk k^{3+\epsilon}\alpha_k^2}
\end{equation}
from which it can be deduced that $C \in
\mathcal{L}(\dom((-A)^{\nu}),\sY))$ for $\nu >
3/4$. 
Now Theorem~\ref{thm:analytic2} implies convergence rate
$\mathcal{O}(n^{-3/2+\epsilon})$ for $\Ex{\norm{\hat{z}_{T,n}-\hat z(T)}_{\sX}^2}$ with $\epsilon
>0$ --- of course, with a multiplicative constant that tends to
infinity as $\epsilon \to 0$.  }
\end{exm}

\section{Input noise} \label{sec:noise}

In this section, we shall study the effect of the input noise $u$ in \eqref{eq:system}. This is done only under the assumption of an admissible observation operator $C$. The main theorem is a generalization of \cite[Theorem~2]{tempI} where $C$ was assumed to be bounded, and the proof follows the same outline.

In the cases without input noise, the state $z(t)$  was parameterized by the initial state through $z(t)=e^{At}x$. Now we define the solution operator $S(t)$ through
\[
S(t):[x,u] \mapsto e^{At}x + \int_0^t e^{A(t-s)}Bdu(s).
\]
Formally we first define $S(t)$ for $u \in H^1(0,T;\sU)$ and then extend it for Brownian motion as a Wiener integral. Then the solution to \eqref{eq:system} is given by $z(t)=S(t)[x,u]$ and the conditional expectation over a given sigma algebra $\sigma$ by $\Ex{z(t)|\sigma}=S(t)\Ex{[x,u]|\sigma}$. In the following theorem, we consider virtually estimating $[x,u]$ using the outputs $y(t_j)$ and then the state estimate is obtained by $S(T)$. 
The well-posedness of $S(t)[0,\tilde u_j]$ for $\tilde u_j := \Ex{u|\{y(t_i), i=1,...,j\}}$ is established next.
\begin{lma}
For fixed $j$, it holds that $\tilde u_j(\cdot) \in H^1(0,T;\sU)$.
\end{lma}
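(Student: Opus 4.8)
The plan is to exploit joint Gaussianity to express $\tilde u_j$ as an explicit deterministic function of time, whose regularity can then be read off from a single covariance kernel. I would first stack the finitely many observations into $Y := \big[ y(t_1)^T, \dots, y(t_j)^T \big]^T$. Since $[x,u,w]$, and hence $(u(s),Y)$, are jointly Gaussian, the conditional expectation is the affine projection
\begin{equation} \nonumber
\tilde u_j(s) = \Ex{u(s)} + \Cov{u(s), Y} \, \Cov{Y,Y}^{\dagger} \big( Y - \Ex{Y} \big),
\end{equation}
where $\dagger$ denotes the (pseudo)inverse. For a fixed realization, $Y - \Ex{Y}$ is a fixed vector and all the $s$-dependence sits in the deterministic matrix-valued function $s \mapsto \Cov{u(s), Y} = \big[ \Cov{u(s), y(t_i)} \big]_{i=1}^j$. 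Hence it suffices to show that each map $s \mapsto \Cov{u(s), y(t_i)}$ belongs to $H^1(0,T)$; the claim follows because $\tilde u_j$ is then a finite linear combination of $H^1$-maps with fixed coefficients.

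Next I would compute the covariance kernel. Because $u$, $x$, and $w$ are mutually independent with $\Ex{u(s)}=0$, only the $u$-driven part of the output contributes. Using $z(t)=S(t)[x,u]$ and Fubini, the $u$-part of $y(t_i)$ is the Wiener integral $\int_0^{t_i} g_i(\tau) \, \ud u(\tau)$ with kernel $g_i(\tau) := \int_\tau^{t_i} C e^{A(\sigma - \tau)} B \, \ud \sigma$ for $\tau < t_i$. The Itô isometry for Wiener integrals then gives
\begin{equation} \nonumber
\Cov{u(s), y(t_i)} = \int_0^{\min(s,t_i)} Q \, g_i(\tau)^* \, \ud \tau,
\end{equation}
so that formally $\tfrac{\ud}{\ud s}\Cov{u(s), y(t_i)} = \mathbbm{1}_{[0,t_i]}(s) \, Q \, g_i(s)^*$, and the whole question reduces to the regularity of $g_i$.

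Finally I would establish that regularity, which is exactly where admissibility enters. Substituting $\rho = \sigma - \tau$ gives $g_i(\tau)=G(t_i-\tau)$ with $G(v):=\int_0^v C e^{A\rho} B \, \ud \rho$. Column by column the integrand is the noise-free output $C e^{A(\cdot)} B \varepsilon_l$, which by admissibility of $C$, namely $\norm{Ce^{A(\cdot)}\zeta}_{L^2((0,T);\sY)} \le H_T\norm{\zeta}_{\sX}$, lies in $L^2((0,T);\sY) \subset L^1((0,T);\sY)$; hence $G \in H^1(0,T)$ with $G' = C e^{A(\cdot)} B$, and in particular $G$, and therefore $g_i$, is continuous and bounded on $[0,t_i]$. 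Consequently $s \mapsto \Cov{u(s), y(t_i)}$ is continuous (an integral of a bounded function, constant for $s \ge t_i$) with weak derivative $\mathbbm{1}_{[0,t_i]}(s) Q g_i(s)^*$ bounded, hence in $L^2(0,T)$, so $\Cov{u(\cdot), y(t_i)} \in H^1(0,T)$ and the first paragraph concludes the proof. The main obstacle will be precisely the well-definedness and smoothness of the kernel $G$: under the minimal hypothesis $C \in \mathcal{L}(\dom(A),\sY)$ the integrand $C e^{A\rho} B \varepsilon_l$ need not even be integrable near $\rho = 0$ when $B\varepsilon_l \notin \dom(A)$, and it is exactly the $L^2$-admissibility bound that restores integrability and upgrades it to $H^1$-regularity.
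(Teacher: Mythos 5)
Your proposal is correct and follows essentially the same route as the paper: write the Gaussian conditional expectation as $\Cov{u(s),Y}\Cov{Y,Y}^{-1}Y$, reduce the claim to the $H^1$-regularity of the finitely many deterministic maps $s\mapsto\Cov{u(s),y(t_i)}$, compute that covariance explicitly via the kernel $\int_\tau^{t_i}Ce^{A(\sigma-\tau)}B\,\ud\sigma$, and check that its derivative is bounded. The one place you go beyond the paper is in justifying why that kernel is well defined and bounded near $\sigma-\tau=0$ (where $Ce^{A\rho}B$ need not be integrable under $C\in\mathcal{L}(\dom(A),\sY)$ alone); the paper simply asserts boundedness, whereas you correctly invoke the $L^2$-admissibility of $C$, which is indeed the standing assumption of the section and closes that small gap.
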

\begin{proof}
For fixed $t \in [0,T]$ and $j \in \mathbb{N}$, it holds that
\[
\Ex{u(t)|y(\textup{T}_j)}=\Cov{u(t),y(\textup{T}_j)}\Cov{y(\textup{T}_j),y(\textup{T}_j)}^{-1}y(\textup{T}_j).
\]
Here $\Cov{y(\textup{T}_j),y(\textup{T}_j)}^{-1}y(\textup{T}_j)$ is a well defined finite-dimensional vector, and so we only need to differentiate $\Cov{u(t),y(\textup{T}_j)} \in \mathcal{L}(\sY^j,\sU)$ which is equivalent to the differentiability of its adjoint, $\Cov{y(\textup{T}_j),u(t)}$ which is a block operator with components $\Cov{y(t_i),u(t)}$, for $i=1,...,j$. We show that each block has a bounded derivative.

Since
\begin{align*}
y(t_i)&=C\int_0^{t_i}e^{As}x \, ds + C \int_0^{t_i} \!\! \int_0^s e^{A(s-r)}B \, du(r) \, ds + w(t_i)
\\ &=C\int_0^{t_i}e^{As}x \, ds+C\int_0^{t_i} \int_r^{t_i} e^{A(s-r)}Bds \, du(r) +w(t_i)
\end{align*}
it holds that
\[
\Cov{y(t_i),u(t)}=C\int_0^{t \wedge t_i} \int_r^{t_i}e^{A(s-r)}BQ ds \, dr
\]
and therefore
\[
\frac{d}{dt}\Cov{y(t_i),u(t)}=\left\{ \!\! \begin{array}{ll} \int_t^{t_i}Ce^{A(s-t)}BQ ds, & \textup{if } t < t_i, \\ 0, & \textup{if } t \ge t_i \end{array} \right.
\]
which is  bounded for $t,t_i \in [0,T]$, concluding the result.
\end{proof}

We are now ready to proceed to the main result of the paper.

\begin{thm} \label{thm:noise}
Assume that $C$ is an admissible observation operator, that is, for all $T\ge0$ there exists $H_T\ge0$ such that $\int_0^T \norm{Ce^{At}x}_{\Ys}^2 dt \le H_T^2 \norm{x}_{\Xs}^2$. In addition, assume that one set of assumptions in Theorems~\ref{thm:unbounded}--\ref{thm:analytic2} are satisfied. Then for $\hat z_{T,n}$ and $\hat z(T)$ defined in \eqref{eq:estimates} it holds that
\[
\Ex{\norm{\hat z_{T,n}-\hat z(T)}_{\Xs}^2} \le \frac{M_1(T)}n+\frac{M_2(T)}{n^2} + \textup{err}_x
\]
where $M(T)$ is given below in \eqref{eq:M_input} and $\textup{err}_x$ is given by the respective Theorem~\ref{thm:unbounded}--\ref{thm:analytic2}.

\end{thm}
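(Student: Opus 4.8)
The plan is to follow the outline of \cite[Theorem~2]{tempI} and exploit the linearity of the conditional expectation to split the increments of the martingale $\tilde z_j$ into a contribution of the initial state $x$ and a contribution of the input noise $u$: the former is handled by the noiseless estimates already proved, and the latter is controlled using admissibility alone. First I would write $\Ex{z(T)|y(\textup{T}_j)}=S(T)\Ex{[x,u]|y(\textup{T}_j)}=e^{AT}\tilde x_j + S(T)[0,\tilde u_j]$, which is well defined thanks to the preceding lemma guaranteeing $\tilde u_j \in H^1(0,T;\sU)$. Exactly as in the noiseless case, subtracting the linear interpolant $\tfrac12\big(y(t_{j+1}-h)+y(t_{j+1}+h)\big)$ from $y(t_{j+1})$ produces a decorrelated observation
\[
\tilde y_{j+1}=C_h(t_{j+1})x + G_h(t_{j+1})[u] + \tilde w_{j+1},
\]
where $\tilde w_{j+1}\sim N(0,\tfrac h2 R)$ is independent of $y(\textup{T}_j)$ and of $[x,u]$ (here using $u \ind w$), and where the input-noise observation operator acts on the noise-driven state $v(s)=\int_0^s e^{A(s-r)}B\,du(r)$ through $G_h(t)[u]=\tfrac{C}2\big(\int_{t-h}^t v(s)\,ds - \int_t^{t+h}v(s)\,ds\big)$.

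With the stacked operator $H=[C_h(t_{j+1}),\,G_h(t_{j+1})]$ acting on the joint error $[x-\tilde x_j,\,u-\tilde u_j]$, equation \eqref{eq:trace} together with the trace estimate from the proof of Lemma~\ref{lma:batch} gives $\Ex{\norm{\tilde z_{j+1}-\tilde z_j}_{\sX}^2}\le \tfrac{C_R}{h}\tr(HP_jH^*)\Ex{\norm{\tilde z_j - z(T)}_{\sX}^2}$. I would then use $\norm{a+b}^2\le 2\norm{a}^2+2\norm{b}^2$ to split $\tr(HP_jH^*)=\Ex{\norm{C_h(t_{j+1})(x-\tilde x_j)+G_h(t_{j+1})(u-\tilde u_j)}_{\sY}^2}$ into a state part and a noise part. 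The state part, summed over the dyadic batches exactly as in Lemma~\ref{lma:batch}, reproduces the bound of whichever of Theorems~\ref{thm:unbounded}--\ref{thm:analytic2} is assumed, yielding the term $\textup{err}_x$. For the noise part, conditioning reduces variance, $\Cov{u-\tilde u_j}\le\Cov{u}$, so $\Ex{\norm{G_h(t_{j+1})(u-\tilde u_j)}_{\sY}^2}\le\Ex{\norm{G_h(t_{j+1})u}_{\sY}^2}$; this removes the conditional covariance $P_j$ and decouples the noise estimate from the filtering recursion.

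It then remains to bound $\sum_{l=1}^{2^{K-1}n}\Ex{\norm{G_h((2l-1)h)u}_{\sY}^2}$ and to sum over $K$. Writing $G_h(t)[u]$ as a single Wiener integral $\int_0^{t+h}\Psi_t(r)\,du(r)$ via stochastic Fubini and applying the It\^o isometry gives $\Ex{\norm{G_h(t)u}_{\sY}^2}=\int_0^{t+h}\norm{\Psi_t(r)Q^{1/2}}_{\textup{HS}}^2\,dr$. I would split the time integral at $r=t-h$: on the regular region $r<t-h$ one has $\Psi_t(r)=C_h(t-r)B$, so the same smoothness estimates for $C_h$ used in the noiseless proofs apply, whereas on the singular region $r\in(t-h,t+h)$ a freshly excited convolution $C\int_0^{|t-r|}e^{As}\,ds\,B$ appears. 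Admissibility controls the latter through $\norm{C\int_0^a e^{As}\xi\,ds}_{\sY}\le\sqrt a\,H_a\norm{\xi}_{\sX}$. Summing over $l=1,\dots,2^{K-1}n$, weighting by $\tfrac{C_R}{h}$ and the batch size $2^{K-1}n=\tfrac{T}{2h}$, and summing the resulting series over the dyadic scales $K$ produces the two contributions $M_1(T)/n$ and $M_2(T)/n^2$; combined with $\textup{err}_x$ through the telescope identity \eqref{eq:telescope} this gives the stated bound.

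The step I expect to be the main obstacle is the singular near-diagonal term: the stochastic convolution $v(s)$ generally does not lie in $\dom(A)$, so $C$ cannot be applied pointwise, and the kernel $\Psi_t(r)$ is least regular precisely where $r\approx t$. Admissibility is exactly what rescues this, replacing the pointwise action of $C$ by an $L^2$-in-time bound on $Ce^{A(\cdot)}\xi$ that keeps the Wiener integral and its variance finite. Equally delicate is the bookkeeping across dyadic scales: one must check that the per-scale estimates assemble into a convergent geometric series in $K$, so that the refinement contributes a genuine $n^{-1}$ rate rather than an accumulating one, and it is here that the admissibility constant $H_T$ enters the final constants $M_1(T)$ and $M_2(T)$.
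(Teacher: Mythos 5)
Your overall architecture --- estimate $[x,u]$ jointly, split the increment into a state part handled by the noiseless theorems and an input-noise part handled by a Wiener-integral variance computation, then sum over the dyadic scales via the telescope identity --- is the same as the paper's, and your treatment of the regular region $r<t-h$ (kernel $\Psi_t(r)=C_h(t-r)B$, with \eqref{eq:C_hder} and admissibility applied to $Ce^{A(\cdot)}AB$ giving the $h^4$, i.e.\ $M_2(T)/n^2$, contribution) is sound. The genuine gap is your estimate on the singular region $r\in(t-h,t+h)$. There the kernel has the form $\Psi_t(r)=\frac{C}{2}\bigl(\int_0^{t-r}e^{As}B\,ds-\int_{t-r}^{t-r+h}e^{As}B\,ds\bigr)$ (up to signs, with the obvious modification for $r>t$), and your proposed bound $\norm{C\int_0^a e^{As}\xi\,ds}_{\sY}\le\sqrt{a}\,H_a\norm{\xi}_{\sX}$ only yields $\norm{\Psi_t(r)}\lesssim\sqrt{h}\,H_T\norm{B}_{\mathcal{L}(\sU,\sX)}$, because $H_a$ does not tend to $0$ as $a\to0$ for genuinely unbounded admissible $C$ (point evaluation for the left shift on $L^2(0,\infty)$ has $H_a\equiv1$). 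Integrating over the $2h$-wide singular region then gives only $O(h^2)$ for its contribution to $\Ex{\norm{G_h(t)u}_{\sY}^2}$; after multiplying by $C_R/h$ per increment and by the batch size $T/(2h)$ per scale, the per-scale sum is $O(1)$ in $K$ and the series over the dyadic scales diverges. You need $O(h^3)$ here.

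The fix --- and what the paper actually does --- is to use the smoothing of $B$ rather than admissibility near the diagonal: the theorem's constants already presuppose $B\in\mathcal{L}(\sU,\dom(A))$, so $Ce^{As}B$ is uniformly bounded in $\mathcal{L}(\sU,\sY)$ on $[0,T]$ and $\norm{\Psi_t(r)}\le h\mu\norm{C}_{\mathcal{L}(\dom(A),\sY)}\norm{B}_{\mathcal{L}(\sU,\dom(A))}$ pointwise on the singular region; in the paper's decomposition this is precisely the $B(u(t)-u(t_{j+1}))$ term, whose time integral over an interval of length $h$ has variance of order $h^3$. This restores the $O(h^3)$ bound and produces exactly the $M_1(T)/n$ term. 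In other words, admissibility rescues the \emph{regular} region (through $\norm{Ce^{A(\cdot)}AB}_{L^2((0,T);\mathcal{L}(\sU,\sY))}\le H_T\norm{B}_{\mathcal{L}(\sU,\dom(A))}$), while the singular region is rescued by $B$ mapping into $\dom(A)$ --- the opposite of the division of labour described in your final paragraph. The remaining deviations from the paper (bounding the cross term by $\norm{a+b}^2\le2\norm{a}^2+2\norm{b}^2$ and $\Cov{u-\tilde u_j,u-\tilde u_j}\le\Cov{u,u}$ instead of using $\mathbb{P}_j\le\mathbb{P}_0$ and the independence of $x$ and $u$) only cost a harmless constant.
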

\begin{proof}
The first part of the proof follows essentially the same outline as the proof of Lemma~\ref{lma:batch}.
Say we are estimating $[x,u]$ and we have $[\tilde x_j,\tilde u_j] := \Ex{[x,u]|\mathcal{F}_j}$ and the corresponding error covariance $\mathbb{P}_j$. Then the state estimate and the corresponding error covariance are given by $\tilde z_j = S(T)[\tilde x_j,\tilde u_j]$ and $S(T)\mathbb{P}_jS(T)^*$ --- although in the last equation the formal adjoint $S(T)^*$ is only defined in connection  with the covariance $\mathbb{P}_j$, namely $\mathbb{P}_jS(T)^*h=\Ex{[\tilde x_j-x,\tilde u_j-u]\ip{S(T)[\tilde x_j-x,\tilde u_j-u],h}_{\sX}}$.

The output in \eqref{eq:system} is given by
\[
y(t)=C\int_0^{t}e^{As}x \, ds + C \int_0^{t} \!\! \int_0^s e^{A(s-r)}B \, du(r) \, ds + w(t).
\]
As with the noiseless case, when we are including the measurement $y(t_{j+1})$, in order to get rid of the output noise correlation,  we shall subtract the linear interpolant from $y(t_{j+1})$, namely define
\[
\tilde y_{j+1} = y(t_{j+1})-\frac12y(t_{j+1}+h)-\frac12y(t_{j+1}-h)
\]
where $h=\frac{T}{2^Kn}$ for the corresponding $K$, given in \eqref{eq:times}. Now this output can be written as
\[
\tilde y_{j+1} = \Ch(t_{j+1})[x,u]^T+\tilde w_{j+1}
\]
where $\tilde w_{j+1} \sim N(0,h/2 \, R)$ is independent of $w(t_i)$ and hence, $\tilde w_i$ with $i=1,...,j$, and $\Ch (t):=[C_h(t), C_{h,u}(t)]$ with $C_h(t)$ defined in \eqref{eq:C_h} and
\begin{equation} \label{eq:Chu}
C_{h,u}(t)u:=\frac{C}{2} \! \left( \int_{t-h}^t \int_0^s \! e^{A(s-r)}Bdu(r) \, ds - \! \int_t^{t+h} \!\! \int_0^s \! e^{A(s-r)}Bdu(r) \, ds \! \right) \hspace{-5mm}
\end{equation}
for $t \ge h$. Now the error covariance increment is obtained just as in Section~\ref{sec:noiseless}, and by the same computation as in the proof of Lemma~\ref{lma:batch} (just replacing $e^{AT}$ by $S(T)$, $P_j$ by $\mathbb{P}_j$, and $C_h$ by $\Ch$), we get
\begin{equation} \label{eq:bd_inc}
\Ex{\norm{\tilde z_{j+1}-\tilde z_j}_{\sX}^2 } \le \frac{2}{h\min(\eig(R))} \Ex{\norm{\Ch [x,u]}_{\sY}^2} \Ex{\norm{\hat z_{T,n}-z(T)}_{\sX}^2}. \hspace{-2mm}
\end{equation}
In order to get a suitable bound for the increment, we must find a bound for the term $\Ex{\norm{\Ch [x,u]}_{\sY}^2} =  \Ex{\norm{C_h x}_{\sY}^2}+\Ex{\norm{C_{h,u} u}_{\sY}^2}$ (recall that $x$ and $u$ are independent). The first term is now bounded as in one of the Theorems~\ref{thm:unbounded}--\ref{thm:analytic2}, so then remains the input noise induced term. To evaluate $C_{h,u}(t_{j+1})u$, note that
\begin{align*}
\int_0^t e^{A(t-s)}Bdu(s)=&\int_0^{t_{j+1}} \! e^{A(t-s)}Bdu(s) \\ & +\int_{t_{j+1}}^t \! A \int_0^s e^{A(s-r)}Bdu(r) \, ds + \int_{t_{j+1}}^t B du(s)
\end{align*}
for $t \ge t_{j+1}$. In order to get a similar expression for $t < t_{j+1}$, just change $t_{j+1} \leftrightarrow t$ in the bounds of the last two integrals and put minus signs in front of them. Of course the last term is just $\int_{t_{j+1}}^t B du(s)=B(u(t)-u(t_{j+1}))$. Inserting this to \eqref{eq:Chu} gives
\begin{align}
  \label{eq:Chu_mod}
&C_{h,u}(t_{j+1})u \\ &=-\frac{C}{2} \Bigg[  \int_{t_{j+1}-h}^{t_{j+1}} \left( \int_t^{t_{j+1}} \!\! A \int_0^s e^{A(s-r)}Bdu(r) \, ds - B\big(u(t)-u(t_{j+1})\big) \right) dt \nonumber \\ \nonumber
& \qquad + \int_{t_{j+1}}^{t_{j+1}+h} \left( \int_{t_{j+1}}^t \!\! A \int_0^s e^{A(s-r)}Bdu(r) \, ds + B\big(u(t)-u(t_{j+1})\big) \right) dt \Bigg]. 
\end{align}
These two terms are very similar by nature so it suffices to find a bound for one of them and 
use the same bound for both terms. Thus, let us consider the first part of the latter term, namely
\begin{align*}
&\frac{C}{2}\int_{t_{j+1}}^{t_{j+1}+h} \!\! \int_{t_{j+1}}^t \!\! A \int_0^s e^{A(s-r)}Bdu(r) \, ds \, dt \\ &=\frac{C}{2}\int_{t_{j+1}}^{t_{j+1}+h} \!\! \int_{t_{j+1}}^t \!\! A \int_0^{t_{j+1}} e^{A(s-r)}Bdu(r) \, ds \, dt \\ & \quad +\frac{C}{2}\int_{t_{j+1}}^{t_{j+1}+h} \!\! \int_{t_{j+1}}^t \!\! A \int_{t_{j+1}}^s e^{A(s-r)}Bdu(r) \, ds \, dt \\
& = \frac{C}{2}\int_0^{t_{j+1}} \!\! \int_{t_{j+1}}^{t_{j+1}+h} \!\! (t_{j+1}+h-s)Ae^{A(s-r)}Bds \, du(r)  \\ & \quad+ \frac{C}{2}\int_{t_{j+1}}^{t_{j+1}+h} \!\! \int_r^{t_{j+1}+h} \!\! ({t_{j+1}}+h-s)Ae^{A(s-r)}Bds \, du(r) \\
&=(I) + (II).
\end{align*}
Then
\begin{align*}
\Cov{(I),(I)}=\frac14\int_0^{t_{j+1}} \!\! \int_{t_{j+1}}^{t_{j+1}+h} \!\! \int_{t_{j+1}}^{t_{j+1}+h} \!\!\!  & (t_{j+1}+h-s)(t_{j+1}+h-r) \\ & \hspace{-8mm} \times Ce^{A(s-t)}ABQB^*A^*e^{A^*(r-t)}C^*dr \, ds \, dt,
\end{align*}
and from this, 
\begin{align*}
& \Ex{\norm{(I)}_{\sY}^2}=\tr\big(\Cov{(I),(I)}\big) \\ & \le \frac{\tr(Q)}4 \int_0^{t_{j+1}}  \!\! \left( \int_{t_{j+1}}^{t_{j+1}+h} \!\!\! (t_{j+1}+h-s) \bnorm{Ce^{A(s-t)}AB}_{\mathcal{L}(\sU,\sY)} ds \right)^2  dt
\\
& \le  \frac{h^3}{12} \tr(Q) \int_0^{t_{j+1}} \int_{t_{j+1}}^{t_{j+1}+h} \bnorm{Ce^{A(s-t)}AB}_{\mathcal{L}(\sU,\sY)}^2 ds \, dt
\end{align*}
where the second inequality follows by Cauchy--Schwartz inequality. Now make a change of variables $\tau = s-t$ and note that for a fixed $s\in [t_{j+1},t_{j+1}+h]$, it holds that $\tau \in [s-t_{j+1},s] \subset [0,T]$, yielding
finally
\[
\Ex{\norm{(I)}_{\sY}^2} \le \frac{h^4}{12} \tr(Q) \norm{Ce^{As}AB}_{L^2(0,T;\mathcal{L}(\sU,\sY))}^2
\]
where $\norm{Ce^{As}AB}_{L^2(0,T;\mathcal{L}(\sU,\sY))}^2 \le H_T^2\norm{B}_{\mathcal{L}(\sU,\dom(A))}^2$  due to the admissibility assumption.

For the second term we have
\begin{align*}
\Cov{(II),(II)}=\frac14 \int_{t_{j+1}}^{t_{j+1}+h} \!\! \int_t^{t_{j+1}+h} \!\! \int_t^{t_{j+1}+h} \!\!\! & (t_{j+1}+h-s)(t_{j+1}+h-r) \\ & \hspace{-13mm} \times Ce^{A(s-t)}ABQB^*A^*e^{A^*(r-t)}C^*dr \, ds \, dt
\end{align*}
and, as with the first term,
\[
\Ex{\norm{(II)}_{\sY}^2}=\tr\big(\Cov{(II),(II)}\big) \le \frac{h^4}{12} \tr(Q)  \norm{Ce^{As}AB}_{L^2(0,T;\mathcal{L}(\sU,\sY))}^2.
\]
In $(I)$, $r \in [0,t_{j+1}]$ and in $(II)$, $r \in [t_{j+1},t_{j+1}+h]$, and thus they are independent.
Therefore,
\[
\Ex{\norm{(I)+(II)}_{\sY}^2} \le \frac{h^4}{6} \tr(Q) \norm{Ce^{As}AB}_{L^2(0,T;\mathcal{L}(\sU,\sY))}^2.
\]
It is well known that
\[
\Cov{ \int_0^h Bu(t) \,dt, \int_0^h Bu(t) \,dt}=\frac{h^3}3 BQB^*
\]
and so
\[
\Ex{\norm{\frac{C}2\int_{t_{j+1}}^{t_{j+1}+h} \!\!\!\! B(u(t)-u(t_{j+1})) dt}_{\sY}^2} \le \frac{h^3}{12}\norm{C}_{\mathcal{L}(\dom(A),\sY)}^2\norm{B}_{\mathcal{L}(\sU,\dom(A))}^2 \tr(Q).
\]
In \eqref{eq:Chu_mod}, the two $B(u(t)-u(t_{j+1}))$-terms are independent (because in the first one, $t \le t_{j+1}$ and in the second, $t \ge t_{j+1}$) and by utilizing this and gathering the above bounds, we get
\begin{align*}
&\Ex{\norm{C_{h,u}(t_{j+1})u}_{\sY}^2} \le 6\Ex{\norm{(I)+(II)}_{\sY}^2}+6  \frac{h^3}{12}\norm{C}_{\mathcal{L}(\dom(A),\sY)}^2\norm{B}_{\mathcal{L}(\sU,\dom(A))}^2 \tr(Q) \\
& \quad \le h^4 \norm{Ce^{As}AB}_{L^2(0,T;\mathcal{L}(\sU,\sY))}^2\tr(Q)+\frac{h^3}{2}\norm{C}_{\mathcal{L}(\dom(A),\sY)}^2\norm{B}_{\mathcal{L}(\sU,\dom(A))}^2 \tr(Q).
\end{align*}
Combining this with \eqref{eq:bd_inc}  gives
\begin{align*}
&\Ex{\norm{\tilde z_{j+1}-\tilde z_j}_{\sX}^2 } \\ & \le
 \frac{2h^3}{\min(\eig(R))}  \norm{Ce^{As}AB}_{L^2(0,T;\mathcal{L}(\sU,\sY))}^2\tr(Q)\Ex{\norm{\hat z_{T,n}-z(T)}_{\sX}^2} \\ & \quad +\frac{h^2}{\min(\eig(R))}\norm{C}_{\mathcal{L}(\dom(A),\sY)}^2\norm{B}_{\mathcal{L}(\sU,\dom(A))}^2 \tr(Q)\Ex{\norm{\hat z_{T,n}-z(T)}_{\sX}^2} + \textup{incr}_{[x,j+1]}
 \end{align*}
where $\textup{incr}_{[x,j+1]}$ is the contribution of the initial state $x$ obtained from one of the Theorems~\ref{thm:unbounded}--\ref{thm:analytic2}.

As was done in the proof of Theorem~\ref{thm:unbounded}, the bound obtained for increment $\Ex{\norm{\tilde z_{j+1}-\tilde z_j}_{\sX}^2}$ is multiplied by $\frac{T}{2h}$ to get an upper bound for increments corresponding to one value $h$, that is, one $K$ (see Figure~\ref{fig:times}):
\begin{equation} \label{eq:bdK}
\sum_{j=2^{K-1}n}^{2^Kn-1} \Ex{\norm{\tilde z_{j+1}-\tilde z_j}_{\sX}^2} \le \frac{M_1(T)}{2^Kn}+\frac{3M_2(T)}{(2^Kn)^2}+ \textup{incr}_{[x,j+1]}
\end{equation}
with
\begin{equation} \label{eq:M_input}
\begin{cases}
\! M_1(T)=\frac{T^2\tr(Q)}{2\min(\eig(R))}\! \norm{C}_{\mathcal{L}(\dom(A),\sY)}^2\norm{B}_{\mathcal{L}(\sU,\dom(A))}^2 \! \Ex{\! \norm{\hat z_{T,n}-z(T)}_{\sX}^2}. \hspace{-2mm} \\
\! M_2(T)=\frac{T^3\tr(Q)}{3\min(\eig(R))}  H_T^2\norm{B}_{\mathcal{L}(\sU,\dom(A))}^2\Ex{\norm{\hat z_{T,n}-z(T)}_{\sX}^2}, 
\end{cases}
\end{equation}
Now summing up \eqref{eq:bdK} for $K=1,2,...$ yields the result. 
\end{proof}

\section{Discussion}

In this paper we extended the convergence results presented by the author in \cite{tempI}. There the convergence rate estimates for $\Ex{\norm{\hat z_{T,n}-\hat z(T)}_{\sX}^2}$ were shown for finite-dimensional systems and infinite-dimensional systems with bounded observation operator $C$. Now convergence rate estimates were found for systems with unbounded observation operators with some additional assumptions on the system operators. Firstly, a result was shown for systems with diagonalizable main operators $A$. In this case, some additional assumptions were needed, including a slightly nonstandard assumption on the output operator (assumption {\it (iii)} in Theorem~\ref{thm:unbounded}). In the problems arising from PDEs on one-dimensional spatial domains, this is not a big problem but unfortunately with more complicated systems, finding a suitable $\gamma$ might be close to a mission impossible. The spectral asymptotics, on the other hand, is an extensively studied field --- so much so that it has even been a subject of a few books, such as \cite{Spectral1} by Levendorski\`i and \cite{Spectral2} Safarov and Vassiliev. Theorem~\ref{thm:unbounded2} treats the case with assuming essentially just the admissibility of the observation operator $C$. Two results were shown for systems with analytic semigroups, in which case the other technical assumptions were not needed. 
The effect of the input noise was studied in Theorem~\ref{thm:noise}, which extends the corresponding earlier result \cite[Theorem~2]{tempI} to admissible observation operators.

In all results of the paper --- except for the analytic semigroup case without input noise --- the convergence rate estimates are of the form $\frac{M T^{k+1}}{n^k}=MT \Delta t^k$, meaning that the estimates deteriorate as $T$ grows. In general, this cannot be completely avoided because there is no guarantee that the output sampling does not cause essential loss of information. A result where the bound would not deteriorate as $T$ grows could be possible under some additional assumptions (like exponential stability of the system), but the long time behaviour should be anyway studied by comparing the solutions of the corresponding discrete- and continuous-time algebraic Riccati equations.

We remark that in this paper, as well as in \cite{tempI}, it has been assumed that the time discretization can be done perfectly and the only error source is the time-sampling of the output signal $y$ that is defined in continuous time. Further research would be needed to estimate the error caused by approximate discretization schemes.


\end{document}